\newtheorem{theorem}{Theorem}
\newtheorem{lemma}{Lemma}
\newtheorem{remark}{Remark}
\newtheorem{definition}{Definition}
\newtheorem{corollary}{Corollary}
\DeclareMathOperator*{\esssup}{{\rm ess\,sup}}
\newcommand{\Rd}{{\mathbb R}^d}
\newcommand{\RR}{\mathbb R}
\newcommand{\setsClass}{\mathbb K}
\newcommand{\conesClass}{\mathcal{C}}
\newcommand{\polarK}{K^\circ}
\newcommand{\weightsClass}{\mathcal{W}}
\newcommand{\weightedLp}{\mathcal{L}}
\newcommand{\mes}{{\rm meas\,}}
\newcommand{\KC}{K\cap C}
\begin{document}

\title[On approximation of hypersingular integral operators...]{On approximation of hypersingular integral operators by bounded ones}
\author[V.~Babenko]{Vladyslav Babenko}
\address{Department of Mathematical Analysis and Theory of Functions, Oles Honchar Dnipro National University, Dnipro, Ukraine}

\author[O.~Kovalenko]{Oleg Kovalenko}
\address{Department of Mathematical Analysis and Theory of Functions, Oles Honchar Dnipro National University, Dnipro, Ukraine}

\author[N.~Parfinovych]{Nataliia Parfinovych}
\address{Department of Mathematical Analysis and Theory of Functions, Oles Honchar Dnipro National University, Dnipro, Ukraine}

\keywords{Hypersingular integral operator, Stechkin's problem, Landau-Kolmogorov type inequality}

\subjclass[2010]{26D10, 41A17, 41A44, 41A55}

\begin{abstract}
We solve the Stechkin problem about approximation of generally speaking unbounded hypersingular integral operators by bounded ones. As a part of the proof, we also solve several related and interesting on their own problems. In particular, 
we obtain sharp Landau-Kolmogorov type inequalities in both additive and multiplicative forms for hypersingular integral operators and prove a sharp Ostrowski type inequality for multivatiate Sobolev classes. We also give some applications of the obtained results, in particular study the modulus of continuity of the hypersingular integral operators, and solve the problem of optimal recovery of the value of a hypersingular integral operator based on the argument known with an error.
\end{abstract}

\maketitle

\section{Introduction}
Let $X$ and $Y$ be Banach spaces, $\mathcal{L}(X,Y)$ be the space of linear bounded operators $S\colon X\to Y$, and let $A\colon X \to Y$ be an operator (not necessarily linear) with the domain $D_A \subset X$. Let also $Q\subset D_A$ be some class of elements.

The problem to find the quantity 
\begin{equation} \label{uklonenie}
    U(A,S; Q)=\sup_{x\in Q}\|Ax-Sx\|_Y
\end{equation}
plays an important role in Approximation Theory and Numerical Analysis. Such problem occurs, for example, in the studies of the approximation of functional classes by linear methods, in estimates of the error of  various numeric methods, in particular in formulae of numeric integration, and other investigations.

Quantity~\eqref{uklonenie} is connected to a series of optimization problems. For example, treating an operator $A$ as given, we may want to find an operator $S$ from some family of operators that minimizes the right-hand side of~\eqref{uklonenie}. A classical example of this kind is the problem to find the best linear method of approximation of a given functional class by a given approximating set.

In this article we are interested in approximation of a generally speaking unbounded  hypersingular integral operator
by linear operators with norm not exceeding a given number $N$. The problem is to find the quantity
$$  
E_N(A,Q)=\inf\left\{ U(A,S;Q)\colon S\in{\mathcal L}(X,Y), \|S\|\le N\right\}.
$$
This problem first arose
in Stechkin’s investigations in 1965. The statement of the problem, first important results, and
solutions to this problem for differential operators of small orders are presented in~\cite{Stechkin1967}. For a survey of further results on this problem see~\cite{ Arestov1996}. 

Inequalities that estimate norms of the intermediate derivatives of univariate or multivariate functions using the norms of the functions and their derivatives of higher order play an important role in many branches of Analysis and its applications. It appears that the richest applications are obtained from sharp inequalities of this kind, which attracts much interest to the inequalities with the smallest possible constants. For univariate functions, the results by Landau~\cite{Landau}, and Kolmogorov~\cite{Kolmogorov1939}, are among the brightest ones in this topic. Inequalities of this kind are often called the Landau-Kolmogorov type inequalities. A survey on the results for univariate and multivariate functions for the case of derivatives of integer order and further references can be found in~\cite{
Arestov1996, BKKP, Konovalov1978,
BuslaevTikhomirov1979,Timoshin1995,
Timofeev1985, BKP1997, Babenko2000}. 

In many questions of Analysis, the necessity to deal with fractional derivatives~\cite{Samko} , hypersingular and more general integral operators occurs. Some results and further references  related to Landau-Kolmogorov type inequalities for such operators can be found in \cite{Geisberg1965,Arestov1979,MagTikh1981,BCh2001, BabenkoPichugov2007,Babenko07,BabenkoPichugov2010,babenko2010,BPP2010,BP2012-nabla,BP2012-delta,BPP2014-delta,BP2016-delta, MBDK}.

The Stechkin problem, in turn, is intimately connected to Landau-Kolmogorov type inequalities. We briefly describe this connection, since such scheme of arguments, (which is minor generalization of  Stechkin's techniques) will be used during the proof of the main results of this article. 

Knowing the value of the quantity in~\eqref{uklonenie}, one can immediately write a Landau-Kolmogorov  type inequality in the additive form:
$$
    \forall x\in Q, \quad \|Ax\|\le \|Ax-Sx\|+\|Sx\|\le U(A,S;Q)+\|S\|\cdot \|x\|.
$$
If in addition there exist an operator $\overline{S}\in \mathcal{L}(X,Y)$ and a  bounded sequence  of elements $\{x_n\}\subset Q$ such that
\begin{equation}\label{i::equality}
    \|Ax_n\|=\left(U(A,\overline{S};Q)+\|\overline{S}\|\cdot \|x_n\|\right)(1+o(1)) \text{ as } n\to\infty,
\end{equation}
then 
$$
E_{\left\|\overline{S}\right\|}(A, Q)=U(A,\overline{S};Q).
$$
Indeed, for each operator $S$ with $\|S\|\le \|\overline{S}\|$, one has 
\begin{equation}\label{i::upper-bound-5}
    \|Ax_n\|\le U(A,S;Q)+\|S\|\cdot \|x_n\|
    \le U(A,S;Q)+\|\overline{S}\|\cdot \|x_n\|.
\end{equation}
Passing to a limit as $n\to\infty$, due to boudedness of $\{x_n\}$,  equalities~\eqref{i::equality} and~\eqref{i::upper-bound-5}, we obtain
$
U(A,\overline{S};Q)\le U(A,S;Q),
$
as required. 

The article is organized as follows. In Section~\ref{sec::notations} we give necessary definitions and formulate our main results. 
In Section~\ref{sec::ostrowski} we compute norms of deviation of a hypersingular integral operator from  bounded operators of a particular family; this result can also be considered as  a sharp Ostrowski 
type inequality~\cite{Ostrowski1938} (see e.g. the survey~\cite{Dragomir17} for other results in this area and further references) for the integral with rather general weight, and is a generalization of the one from~\cite{Babenko20}.
In Section~\ref{sec::kolmogorov},  we obtain a sharp Landau-Kolmogorov type inequality for a large class of hypersingular integral operators, which in particular includes  the Riesz fractional derivatives. This result generalizes~\cite{BP2012-nabla}. Following the described above scheme, we also give a proof of our main result Theorem~\ref{th::StechkinProblem} in this section.
In Section~\ref{sec::applications} we give several applications of the obtained results.  In particular, we  solve the problem of computation of the modulus of continuity for the hypersingular integral operators, as well as the problem of recovery of such operators on the class of known with error elements.  Finally, in order not to shadow the main ideas of the proofs of our results, we transfer several auxiliary technical proofs into Appendices.

\section{Notations and the main results}\label{sec::notations}
For $x,y\in\Rd$ denote by $(x,y)$ the dot product of $x$ and $y$. Let $K\subset \Rd$ be an open convex symmetric with respect to $\theta$ bounded set with $\theta\in{\rm int\,} K$. Denote by $\mathbb{K}$ the family of all such sets $K$. For $K\in\mathbb{K}$ and $x\in \Rd$ denote by 
$|x|_K$ the norm of $x$ generated by the set $K$, i.e.
$$|x|_K:=\inf\{\lambda>0\colon x\in \lambda K\}.$$
If $K$ is the unit ball in $\Rd_p$, $p\in [1,\infty]$, then we write $|\cdot|_p$ instead of $|x|_K$.
For a set $K\in \setsClass$, denote by $\polarK$ its polar set, 
$$K^\circ = \left\{y\in \Rd\colon \sup\limits_{x\in K} (x,y)\leq 1\right\}.$$
It is well known that the norm $|\cdot|_{\polarK}$ is the dual to $|\cdot|_K$ norm, i.e. 
$$
|z|_{\polarK} = \sup \{(x,z)\colon |x|_K\leq 1\}.
$$ 
In particular for all $x,y\in\Rd$,
$
(x,y)\leq |x|_K\cdot |y|_{\polarK}.
$

Let $Q\subset \Rd$ be an open set. By  $W^{1,p}(Q)$, $p\in [1,\infty]$, we denote the Sobolev space of functions $f\colon Q\to \RR$ such that $f$ and all their (distributional) partial derivatives of the first order belong to $L_p(Q)$. Let $K\in\setsClass$ be fixed.  
Since all norms in the finite dimensional space $\Rd$ are equivalent, and obviously $|\nabla f|_1\in L_p(Q)$ for all $f\in W^{1,p}(Q)$, one has $|\nabla f|_K\in L_p(Q)$.
For $p\in[1,\infty]$ set 
$$
W^{K}_{p}(Q):=\left\{f\in W^{1,p}(Q)\colon \||\nabla f|_{\polarK}\|_{L_p(Q)}\leq 1\right\}.
$$

By $\conesClass$ we denote the set of all open convex cones generated by a finite number of points, i.e. the family of non-empty sets of the form
$$
 {\rm int\,}\left\{\sum_{k=1}^n c_k x_k\colon c_k\geq 0, k=1,\ldots, n\right\}\subset \Rd
$$
with $n\in\mathbb{N}$ and $x_1,\ldots, x_n\in\Rd$. Note that the family $\conesClass$ contains the whole space $\Rd$ as an element, and for each $C\in \conesClass$ and polytope $K\in\setsClass$ (by a polytope, we mean a convex hull of a finite set of points from $\Rd$) the set $
K\cap C$, is also a polytope.

Everywhere below we consider the case $d\geq 2$, $p\in (d,\infty]$ and $p'$ is such that $p^{-1}+(p')^{-1}=1$. We also fix a cone $C\in\conesClass$.

As the set $Q$, we consider either the sets of the form $\KC$ or finite unions of shifts of a set from $\setsClass$. Both types of sets satisfy the cone condition, and hence the imbedding of the class $W^{1,p}(Q)$ into the space of bounded continuous on $Q$ functions holds, see~\cite[Chapter~4]{adams2003}. Thus  the values $f(x)$, $x\in Q$ and $f\in W^{1,p}(Q)$ are well defined.

For $C\in \conesClass$ by $L_{\infty,p}^1(C)$, $p\in [1,\infty]$, we denote the space of functions $f\colon C\to \RR$ such that $f\in L_\infty(C)$ and all their (distributional) partial derivatives of the first order belong to $L_p(C)$. 
For $p\in[1,\infty]$ and $C\in \conesClass$ set 
$$
W_{\infty,p}^K=W^{K}_{\infty,p}(C):=\left\{f\in L_{\infty,p}^1(C)\colon \||\nabla f|_{\polarK}\|_{L_p(C)}\leq 1\right\}.
$$

In the article we approximate the following hypersingular integral operator $D_{K,w}\colon L_{\infty,p}^1(C)\to L_\infty(C)$
\begin{equation}\label{unboundedOperator}
    (D_{K,w} f)(x)
:=
\int_{C}w (|t|_K)(f(x)-f(x+t)) dt, x\in C,
\end{equation}
by bounded operators,  where the ''radial'' weight $w(|t|_K)$ is generated by a uni-variate function $w\colon (0,\infty)\to[0,\infty)$.
Note that in the case when $K$ is the unit ball in $\Rd$, $C=\Rd$ and $w (t)=t^{-(d+\alpha)}$, $0<\alpha < 1$, we obtain the Riesz derivative $D^\alpha$ of order $\alpha$,  see \cite[\S 26.7]{Samko}.  It is worth mentioning that the Riesz derivative $D^\alpha$ corresponds \cite[\S 25.4]{Samko} to the fractional degree $(-\Delta)^{\alpha/2}$ of the Laplace operator.
  
 In order to specify the class of considered weights, we need the following definitions.
 \begin{definition}
For $ 0\leq a < b\leq +\infty$ and $p\in[1,\infty]$, denote by $\weightedLp_p(a,b)$ the space of all  measurable functions $w\colon (a,b)\to\RR_+$ with finite norm
$$
\|w\|_{\weightedLp_p(a,b)} =
\begin{cases}
\left(\int_a^b t^{d-1} w^p(t)dt\right)^{1/p}, & p < \infty,\\
\esssup_{t\in (a,b)} t^{d-1}w(t),& p = \infty.
\end{cases}
$$
\end{definition}
Note that the condition $w\in \weightedLp_p(0,1)$ in the case $p\in [1,\infty)$ guarantees that the integral 
$
\int_{\KC} w^p(|t|_K)dt
$
exists for arbitrary polytope  $K\in\setsClass$ and cone $C\in \conesClass$, see Lemma~\ref{l::radialFunctionIntegral} below. 

\begin{definition}
For $h>0$ and $p\in[1,\infty]$ denote by $\weightsClass_p(0,h)$ the space of all non-negative functions $w\colon (0,h]\to\RR$ such that $w\in\weightedLp_ 1(u,h)$  for all $u\in (0,h)$ and the  function
\begin{equation}\label{defG}
g_{w,h}(u) = g_w(u) = \frac {1} {u^{d-1}} \int\limits_{u}^hw(t)t^{d-1}dt 
\end{equation}
belongs to $\weightedLp_p(0,h)$.
\end{definition}

\begin{definition}
For $h>0$ and $p\in[1,\infty]$ denote by $\weightsClass_p^*(0,h)$ the space of all non-negative functions $w\colon (0,h]\to \RR$ such that $w\in\weightedLp_ p(u,h)$  for all $u\in (0,h)$, for all $\nu$ close enough to $1$ from the left, the function
\begin{equation}\label{wNu}
    w_\nu (u) = \sup_{t\in [\nu u, u]}|w(t)-w(u)|
\end{equation}
belongs to $\weightsClass_p(0,h)$, and 
\begin{equation}\label{defDiffIsSmall1}
    \lim_{\nu\to 1-0}\|g_{w_\nu, h}\|_{\weightedLp_p(0,h)} = 0.
\end{equation}
\end{definition}
As it can be proved (using the arguments from the proof of Lemma~\ref{l::norm-of-truncated} below), the operator $D_{K,w}$ defined in~\eqref{unboundedOperator} is unbounded whenever the integral $\int_{0}^\infty w(\rho)\rho^{d-1}\, d\rho$ diverges and is bounded with norm equal to $2d\cdot \mes(\KC)\int_{0}^\infty w(\rho)\rho^{d-1}\, d\rho$ otherwise. In the statements of the theorems below we use the function $g_{w,h}$ that is defined in~\eqref{defG}.
The main result of the article is given by the following theorem. 

\begin{theorem}\label{th::StechkinProblem}
Let $p\in (d,\infty]$, $C\in\conesClass$, $K\in \setsClass$ be a polytope and  $w\in \weightsClass_{p'}(0,1)\cap \weightedLp_1(1,\infty)$, or $K\in \setsClass$ and  $w\in \weightsClass_{p'}^*(0,1)\cap \weightedLp_1(1,\infty)$. For all 
$$
N\in \begin{cases}
(0,\infty), & D_{K,w} \text{ is unbounded,} \\
\left(0,  \|D_{K,w}\|\right), & \text {otherwise,}
\end{cases}
$$
let $h_N$ be such that
$
    2d\cdot \mes(\KC)\int_{h_N}^\infty w(\rho)\rho^{d-1}\, d\rho =N.
$ Then
$$
E_N(D_{K,w}, W_{\infty,p}^K)=(d\cdot \mes(\KC))^{\frac{1}{p'}}
\|g_{w,h_N}\|_{\weightedLp_{p'}(0,h_N)}.
$$
Moreover, the extremal operator is
\begin{equation}\label{truncatedOperator}
( D_{K, w, h_N}f)(x):=\int_{C\setminus h_NK}w (|t|_K)(f(x)-f(x+t)) dt, x\in C.
\end{equation}
\end{theorem}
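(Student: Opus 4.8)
The plan is to carry out the scheme from the Introduction with the truncated operator $\overline S=D_{K,w,h_N}$ as the candidate for the extremal operator. Throughout write $D_{K,w}=D_{K,w,h_N}+R_{h_N}$, where $R_h:=D_{K,w}-D_{K,w,h}$ is the ``inner'' operator $(R_hf)(x)=\int_{C\cap hK}w(|t|_K)(f(x)-f(x+t))\,dt$. The quantity $h_N$ is well defined on the stated range of $N$ because $h\mapsto 2d\,\mes(\KC)\int_h^\infty w(\rho)\rho^{d-1}\,d\rho$ is continuous and non-increasing, approaching $\|D_{K,w}\|$ (which is $+\infty$ in the unbounded case) as $h\to 0^+$ and tending to $0$ as $h\to\infty$; and if $w\equiv 0$ a.e.\ on $(0,h_N)$ the statement is trivial ($D_{K,w}=D_{K,w,h_N}$ and both sides vanish), so assume $E:=(d\,\mes(\KC))^{1/p'}\|g_{w,h_N}\|_{\weightedLp_{p'}(0,h_N)}>0$.

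Two facts, established in Sections~\ref{sec::ostrowski}--\ref{sec::kolmogorov}, drive the proof. \textbf{(a)} $D_{K,w,h}$ is bounded from $L_\infty(C)$ to $L_\infty(C)$ with $\|D_{K,w,h}\|=2d\,\mes(\KC)\int_h^\infty w(\rho)\rho^{d-1}\,d\rho$; the upper estimate is immediate from $|f(x)-f(x+t)|\le 2\|f\|_{L_\infty}$ and Lemma~\ref{l::radialFunctionIntegral}, and sharpness is realized by a continuous $f$ with $|f|\le 1$, equal to $1$ near a point $x_0$ and to $-1$ on $x_0+(C\setminus hK)$ (possible since $\theta\notin\overline{C\setminus hK}$), cf.\ Lemma~\ref{l::norm-of-truncated}. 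In particular $\|D_{K,w,h_N}\|=N$. \textbf{(b)} (Ostrowski-type identity) for every $h>0$,
$$U\bigl(D_{K,w},D_{K,w,h};W_{\infty,p}^K\bigr)=\sup_{f\in W_{\infty,p}^K}\|R_hf\|_{L_\infty}=(d\,\mes(\KC))^{1/p'}\|g_{w,h}\|_{\weightedLp_{p'}(0,h)}=:E_h .$$
The engine behind (b) is the representation, valid for all $f\in W^{1,p}(C)$ after a Fubini--Tonelli interchange that is legitimate precisely because $p>d$ (equivalently $(d-1)(p'-1)<1$) makes the iterated integrals converge,
$$(R_hf)(x)=-\int_{C\cap hK}\Bigl(\nabla f(x+y),\tfrac{y}{|y|_K}\Bigr)\,g_{w,h}(|y|_K)\,dy ;$$
the bound $\|R_hf\|_{L_\infty}\le E_h$ follows from $\bigl(\nabla f(x+y),\tfrac{y}{|y|_K}\bigr)\le|\nabla f(x+y)|_{\polarK}$, Hölder's inequality on $C\cap hK$ and $\||\nabla f|_{\polarK}\|_{L_p(C)}\le 1$, while equality is attained — for polytopes $K$ directly, and in general by approximation, which is exactly what the class $\weightsClass_{p'}^*(0,1)$ and condition~\eqref{defDiffIsSmall1} are tailored for — by the radial profile $f(x_0+y)=F(|y|_K)$ with $F$ constant for $|y|_K\ge h$ and $F'(r)=-c\,g_{w,h}(r)^{p'-1}$ for $r\in(0,h)$; here $p>d$ again ensures $\int_0^h g_{w,h}(r)^{p'-1}\,dr<\infty$, so $F$ is bounded, and after normalizing $c$ so that $\||\nabla f|_{\polarK}\|_{L_p(C)}=1$ one gets $f\in W_{\infty,p}^K$ and $(R_hf)(x_0)=E_h$.

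Given (a) and (b), the upper bound $E_N(D_{K,w},W_{\infty,p}^K)\le U(D_{K,w},D_{K,w,h_N};W_{\infty,p}^K)=E$ is immediate. For the matching lower bound, (a) and (b) together already yield the Landau--Kolmogorov inequality $\|D_{K,w}f\|_{L_\infty}\le E+N\|f\|_{L_\infty}$ on $W_{\infty,p}^K$, which is sharp: its extremizer is built from the profile $F$ of (b) with $h=h_N$ by fixing $x_0\in C$, putting $\delta_0:=F(0)-F(h_N)>0$ and $f_*(x_0+y):=F(|y|_K)-F(h_N)-\tfrac{\delta_0}{2}$. Then $\|f_*\|_{L_\infty}=\tfrac{\delta_0}{2}$, $f_*(x_0)=\tfrac{\delta_0}{2}$, $f_*\equiv-\tfrac{\delta_0}{2}$ on $x_0+(C\setminus h_NK)$, so, since $R_{h_N}$ kills additive constants and $N=2d\,\mes(\KC)\int_{h_N}^\infty w(\rho)\rho^{d-1}\,d\rho$,
$$(D_{K,w}f_*)(x_0)=(R_{h_N}f_*)(x_0)+(D_{K,w,h_N}f_*)(x_0)=E+\delta_0\,d\,\mes(\KC)\!\!\int_{h_N}^\infty\!\! w(\rho)\rho^{d-1}\,d\rho=E+N\|f_*\|_{L_\infty}.$$
Since $\nabla f_*$ is supported in a bounded set, $D_{K,w}f_*$ is continuous, so for any $S$ with $\|S\|\le N$ and any $\varepsilon>0$ there is a neighbourhood $V\ni x_0$ on which $(D_{K,w}f_*)(x)\ge E+N\|f_*\|_{L_\infty}-\varepsilon$; hence for a.e.\ $x\in V$,
$$|(D_{K,w}f_*-Sf_*)(x)|\ge (D_{K,w}f_*)(x)-\|Sf_*\|_{L_\infty}\ge E+N\|f_*\|_{L_\infty}-\varepsilon-N\|f_*\|_{L_\infty}=E-\varepsilon .$$
Thus $U(D_{K,w},S;W_{\infty,p}^K)\ge E$ for every admissible $S$, whence $E_N(D_{K,w},W_{\infty,p}^K)\ge E$, and equality holds with $D_{K,w,h_N}$ extremal. (When $K$ is not a polytope and equality in (b) is only attained in a limit, one replaces $f_*$ by the corresponding sequence and applies the implication around~\eqref{i::equality}.)

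The main obstacle is step (b), and inside it the \emph{sharpness} for a general $K\in\setsClass$: then $|y|_K$ and $g_{w,h}^{\,p'-1}$ are not regular enough to exhibit the extremal function outright, and $w$ (together with $K$) must be approximated by well-behaved objects with the error kept under control — which is the whole purpose of the classes $\weightsClass_{p'}(0,h)$, $\weightsClass_{p'}^*(0,h)$ and of~\eqref{defDiffIsSmall1}. A secondary, purely technical, point is justifying the Fubini interchange in (b) and the continuity of $D_{K,w}f_*$ used in the lower bound; both rest on the integrability estimates made available by $p>d$.
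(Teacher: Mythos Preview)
Your overall strategy coincides with the paper's: set $\overline S=D_{K,w,h_N}$, use Theorem~\ref{th::mainInequality} to compute $U(D_{K,w},D_{K,w,h_N};W_{\infty,p}^K)$, use Lemma~\ref{l::norm-of-truncated} to get $\|D_{K,w,h_N}\|=N$, and feed an extremal element into the Stechkin scheme~\eqref{i::equality}--\eqref{i::upper-bound-5}. The upper bound and the representation formula you wrote for $R_h$ are fine.

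There is, however, a genuine gap in your lower bound when $C\neq\Rd$. You center the extremizer at a point $x_0\in C$ and claim $(R_{h_N}f_*)(x_0)=E$. But $(R_{h_N}f_*)(x_0)$ integrates over $h_N(\KC)$, while the constraint $\||\nabla f_*|_{\polarK}\|_{L_p(C)}=1$ integrates $g_{w,h_N}^{p'}(|\,\cdot-x_0|_K)$ over $C\cap(x_0+h_NK)=x_0+\bigl((C-x_0)\cap h_NK\bigr)$. Since $C$ is a convex cone and $x_0\in C$, one has $C-x_0\supset C$, so $(C-x_0)\cap h_NK\supsetneq h_N(\KC)$ in general; the normalization then forces a smaller $c$, and a short computation gives
\[
(R_{h_N}f_*)(x_0)=E\left(\frac{\int_{h_N(\KC)}g_{w,h_N}^{p'}(|y|_K)\,dy}{\int_{(C-x_0)\cap h_NK}g_{w,h_N}^{p'}(|y|_K)\,dy}\right)^{1/p}<E.
\]
Thus your argument yields only $E_N\ge E\cdot(\mes(\KC)/\mes K)^{1/p}$ when $x_0+h_NK\subset C$, which is strictly less than $E$ for a proper cone. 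The paper avoids this by centering the extremal function $\psi_{K,w,h_N}$ of~\eqref{extremalPsi} at the vertex $\theta$; then $C\cap h_NK=h_N(\KC)$ exactly and both computations match. Because $\theta\notin C$, one cannot simply evaluate at $\theta$; the paper proves continuity of $D_{K,w}\psi$ and $D_{K,w,h}\psi$ at $\theta$ (Lemma~\ref{l::derivativesContinuity}) so that the $L_\infty(C)$ norm is actually attained there. Your remark that ``$\nabla f_*$ has bounded support, hence $D_{K,w}f_*$ is continuous'' is the right instinct but is not a proof---the singularity of $w$ at the origin has to be controlled, which is what Lemma~\ref{l::derivativesContinuity} does via Theorem~\ref{th::mainInequality}.

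The fix is therefore to take $x_0=\theta$ (equivalently, use $\psi_{K,w,h_N}$), invoke Lemma~\ref{l::derivativesContinuity}, and then run your inequality $|(D_{K,w}f_*-Sf_*)(x)|\ge E-\varepsilon$ for $x\in C$ near $\theta$; for non-polytope $K$ one passes to the approximating sequence $\psi_{K_{\varepsilon_n},w,h_N}$ exactly as you indicate in your closing parenthesis.
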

In order to use the outlined above scheme of the proof for Theorem~\ref{th::StechkinProblem}, we prove several results, which are interesting on their own.
First, we compute the norms of the deviation $D_{K,w}- D_{K,w,h}$ between the operators~\eqref{unboundedOperator} and~\eqref{truncatedOperator} (with arbitrary $h>0$ instead of $h_N$). This result can also be considered as a sharp Ostrowski type inequality. 
\begin{theorem}\label{th::mainInequality}
Let $p\in (d,\infty]$, $C\in\conesClass$, $K\in \setsClass$ be a polytope, $h>0$ and  $f\in W^{1,p}(h \KC)$. For each $w\in \weightsClass_{p'}(0,h)$ one has
\begin{multline}\label{mainInequality}
\left|\int\limits_{\,h\KC}w(|y|_K)[f(y) - f(\theta)]
dy\right|
\\ \leq 
\left(d\cdot\mes(\KC)\right)^{\frac 1{p'}} \left\|g_{w,h} \right\|_{\weightedLp_{p'}(0,h)} 
\||\nabla f|_{\polarK}\|_{L_p(h\KC)}.
\end{multline}
The inequality is sharp. It becomes equality for the functions $a\cdot f_e + b$, where $a,b\in\RR$ and
\begin{equation}\label{extremalFunction}
	f_e(y)=f_{e,h}(y)= \int\limits_0^{|y|_{K}}g_{w,h}^{p'-1}(u)du, \, y\in h\KC.
\end{equation}
If additionally $w\in \weightsClass_{p'}^*(0,h)$, then inequality~\eqref{mainInequality} holds and is sharp for arbitrary $K\in\setsClass$.
\end{theorem}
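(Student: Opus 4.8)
The plan is to prove the inequality~\eqref{mainInequality} first in the case when $K$ is a polytope, by collapsing the $d$-dimensional integral to a one-dimensional one via the Minkowski gauge $|\cdot|_K$, and then to deduce the case of an arbitrary convex body $K\in\setsClass$ by polytopal approximation, for which the additional hypothesis $w\in\weightsClass^*_{p'}(0,h)$ is exactly what is needed. Write each $y\in h\KC\setminus\{\theta\}$ as $y=\rho e$ with $\rho=|y|_K\in(0,h)$ and $e=y/|y|_K$ lying on $\Omega:=\partial K\cap C$; then $dy=\rho^{d-1}\,d\rho\,d\mu_K(e)$ for a ''surface'' measure $\mu_K$ on $\Omega$ with total mass $\mu_K(\Omega)=d\cdot\mes(\KC)$. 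Since $p>d$, the Sobolev embedding provides a continuous representative of $f$, and the absolute continuity of Sobolev functions on almost every line, together with Hölder's inequality applied to the splitting $|\nabla f(se)|=\bigl(|\nabla f(se)|\,s^{(d-1)/p}\bigr)\cdot s^{-(d-1)/p}$ — the second factor lying in $L_{p'}(0,h)$ precisely because $p>d$ — shows that for $\mu_K$-a.e.\ $e$ the map $s\mapsto f(se)$ is absolutely continuous on $(0,h)$ with $f(y)-f(\theta)=\int_0^{|y|_K}(\nabla f(se),e)\,ds$. Substituting this into the left-hand side of~\eqref{mainInequality}, interchanging the integrations in $s$ and $\rho$ by Tonelli's theorem, using $\int_s^h w(\rho)\rho^{d-1}\,d\rho=s^{d-1}g_{w,h}(s)$ and returning to Cartesian coordinates yields the key identity
\[
\int_{h\KC}w(|y|_K)[f(y)-f(\theta)]\,dy=\int_{h\KC}\Bigl(\nabla f(y),\tfrac{y}{|y|_K}\Bigr)\,g_{w,h}(|y|_K)\,dy.
\]

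Since $|y/|y|_K|_K=1$, the duality inequality gives $\bigl|(\nabla f(y),y/|y|_K)\bigr|\le|\nabla f(y)|_{\polarK}$; applying this and then Hölder's inequality with exponents $p$ and $p'$ (the same manipulation with absolute values, run through Tonelli, also shows that the integral in~\eqref{mainInequality} converges absolutely) bounds the right-hand side of the identity by $\||\nabla f|_{\polarK}\|_{L_p(h\KC)}\cdot\|g_{w,h}(|\cdot|_K)\|_{L_{p'}(h\KC)}$. A polar-coordinate computation identifies $\|g_{w,h}(|\cdot|_K)\|_{L_{p'}(h\KC)}^{p'}=\mu_K(\Omega)\int_0^h g_{w,h}^{p'}(s)\,s^{d-1}\,ds=d\cdot\mes(\KC)\,\|g_{w,h}\|_{\weightedLp_{p'}(0,h)}^{p'}$, which is precisely~\eqref{mainInequality} for polytopes $K$.

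For sharpness take $f=f_e$ from~\eqref{extremalFunction}, which belongs to the relevant Sobolev space by the same $p>d$ estimates. Then $\nabla f_e(y)=g_{w,h}^{p'-1}(|y|_K)\,\nabla|y|_K$, and since $|\cdot|_K$ is the support function of $\polarK$ one has $(\nabla|y|_K,y)=|y|_K$ and $|\nabla|y|_K|_{\polarK}=1$ for a.e.\ $y$; hence both inequalities used above turn into equalities — the duality one because $\nabla|y|_K$ is then a unit vector aligned with $y$, and Hölder's one because $(p'-1)p=p'$ — and a direct evaluation of the two sides of~\eqref{mainInequality} (using Tonelli as before and $\int_u^h w(t)t^{d-1}\,dt=u^{d-1}g_{w,h}(u)$) confirms equality. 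The invariance of~\eqref{mainInequality} under $f\mapsto af+b$ then produces the whole family of extremizers.

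Finally, for an arbitrary $K\in\setsClass$ choose symmetric polytopes $K_n\subset K$ with $\bigcup_n K_n=K$. Then $(K_n)^\circ\supset\polarK$, $\mes(K_nC)\le\mes(\KC)$, $hK_nC\uparrow h\KC$, and for every $\nu<1$ one has $\nu K\subset K_n$ for all large $n$, hence $|y|_K\le|y|_{K_n}\le\nu^{-1}|y|_K$ and therefore $|w(|y|_{K_n})-w(|y|_K)|\le w_\nu(|y|_{K_n})$ with $w_\nu$ as in~\eqref{wNu}. Applying the already proved polytope inequality to $K_n$ with the weights $w$ and $w_\nu$ (both admissible by the definition of $\weightsClass^*_{p'}(0,h)$), in the absolute-value form coming from the Tonelli step, and using dominated convergence on $h\KC\setminus hK_nC$, we obtain for each $\nu<1$
\begin{multline*}
\Bigl|\int_{h\KC}w(|y|_K)[f(y)-f(\theta)]\,dy\Bigr|
\le(d\cdot\mes(\KC))^{\frac1{p'}}\|g_{w,h}\|_{\weightedLp_{p'}(0,h)}\||\nabla f|_{\polarK}\|_{L_p(h\KC)}\\
+o_n(1)+(d\cdot\mes(\KC))^{\frac1{p'}}\|g_{w_\nu,h}\|_{\weightedLp_{p'}(0,h)}\||\nabla f|_{\polarK}\|_{L_p(h\KC)}.
\end{multline*}
Letting $n\to\infty$ and then $\nu\to1-$ removes the extra terms by virtue of~\eqref{defDiffIsSmall1}, giving~\eqref{mainInequality} for all $K\in\setsClass$; the extremizer $f_e$ is treated exactly as in the polytope case, the identities for $\nabla|y|_K$ being valid for every convex body. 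The principal difficulty is precisely this last step: the gauge $|\cdot|_K$ must be replaced by a polytopal one with an error that is uniform in the approximation, which is the purpose of the oscillation weight $w_\nu$ and of the decay condition~\eqref{defDiffIsSmall1} defining $\weightsClass^*_{p'}$; a more routine technical point, requiring $p>d$, is the justification of the fundamental theorem of calculus along the rays issuing from $\theta$.
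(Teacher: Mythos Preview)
Your argument is correct and follows essentially the paper's route: represent $f(y)-f(\theta)$ by integrating the gradient along the ray from $\theta$, swap the order of integration to bring in $g_{w,h}$, apply the duality bound $(\nabla f,y/|y|_K)\le|\nabla f|_{\polarK}$ and H\"older, and then handle general $K$ by inner polytopal approximation controlled by the oscillation weight $w_\nu$. The only difference is packaging---the paper triangulates $K\cap C$ over the faces of the polytope and works simplex by simplex (its Lemmas~\ref{l::curvelinearCoords}, \ref{l::radialFunctionIntegral}, \ref{l::extremalFunction}, \ref{l::changeOrderTwice}, \ref{l::secondSummand}), whereas you use the global Minkowski polar coordinates $y=\rho e$ with the cone measure $\mu_K$ and, for sharpness, the a.e.\ identities $(\nabla|y|_K,y)=|y|_K$ and $|\nabla|y|_K|_{\polarK}=1$; these choices are equivalent but somewhat more streamlined.
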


Next we prove the following sharp Landau-Kolmogorov type inequality in the additive form.
\begin{theorem}\label{th::additiveKolmogorov}
Let $p\in (d,\infty]$, $C\in \conesClass$, $K\in \setsClass$ be a polytope, $h>0$, $w\in \weightsClass_{p'}(0,h)\cap \weightedLp_1(h,\infty)$. For each  $f\in L_{\infty,p}^1(C)$ one has
\begin{multline}\label{Kolm-add-th}
\|D_{K,w} f\|_{L_\infty(C)}
 \le 
 \left(d\cdot\mes(\KC)\right)^{\frac 1{p'}} \left\|g_{w,h} \right\|_{\weightedLp_{p'}(0,h)}\cdot \||\nabla f|_{\polarK}\|_{L_p(C)}
\\+2d \cdot \mes(\KC) \left(\int\limits_h^\infty w(\rho)\rho^{d-1}\, d\rho\right)\|f\|_{L_\infty(C)}.
\end{multline}
The inequality is sharp.  It becomes equality on the functions $a\cdot \psi$, where $a\in\RR$ and 
\begin{equation}\label{extremalPsi}
  \psi (t) = \psi_{K,w, h}(t)
=
\begin{cases}
\frac{1}{2}\int\limits_0^{h}g_{w,h}^{p'-1}(u)du -\int\limits_0^{|t|_K}g_{w,h}^{p'-1}(u)du
, &|t|_K\le h,\\
-\frac{1}{2}\int\limits_0^{h}g_{w,h}^{p'-1}(u)du, &|t|_K> h.
\end{cases}  
\end{equation}
If additionally $w\in \weightsClass_{p'}^*(0,h)$, then inequality \eqref{Kolm-add-th} holds and is sharp for arbitrary $K\in \setsClass$.
\end{theorem}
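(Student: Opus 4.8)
The plan is to control the ``local part'' of $D_{K,w}f$ by Theorem~\ref{th::mainInequality} and its ``tail'' by a crude bound. Fix $x\in C$ and write
\[
(D_{K,w}f)(x)=\int\limits_{C\cap hK}w(|t|_K)(f(x)-f(x+t))\,dt+\int\limits_{C\setminus hK}w(|t|_K)(f(x)-f(x+t))\,dt.
\]
Since $C$ is a cone, $C\cap hK=h(\KC)$, and as $x\in C$ we have $x+h(\KC)\subset x+C\subset C$, so $\phi(t):=f(x+t)$ lies in $W^{1,p}(h(\KC))$ with $\phi(\theta)=f(x)$. Applying Theorem~\ref{th::mainInequality} to $\phi$ and noting that $\||\nabla\phi|_{\polarK}\|_{L_p(h(\KC))}=\||\nabla f|_{\polarK}\|_{L_p(x+h(\KC))}\le\||\nabla f|_{\polarK}\|_{L_p(C)}$ bounds the first integral in absolute value by $\left(d\,\mes(\KC)\right)^{\frac1{p'}}\|g_{w,h}\|_{\weightedLp_{p'}(0,h)}\||\nabla f|_{\polarK}\|_{L_p(C)}$. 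For the second integral $|f(x)-f(x+t)|\le2\|f\|_{L_\infty(C)}$, and since $\mes(\{t\in C:|t|_K\le\rho\})=\rho^d\mes(\KC)$, the radial integration formula (Lemma~\ref{l::radialFunctionIntegral}) gives $\int_{C\setminus hK}w(|t|_K)\,dt=d\,\mes(\KC)\int_h^\infty w(\rho)\rho^{d-1}\,d\rho$, so that integral is at most $2d\,\mes(\KC)\left(\int_h^\infty w(\rho)\rho^{d-1}\,d\rho\right)\|f\|_{L_\infty(C)}$. Adding the two estimates and taking the supremum over $x\in C$ yields~\eqref{Kolm-add-th}; the hypotheses $w\in\weightsClass_{p'}(0,h)$ and $w\in\weightedLp_1(h,\infty)$ make both pieces converge and $D_{K,w}$ well defined on $L^1_{\infty,p}(C)$.

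For sharpness the plan is to verify that $f=\psi$ from~\eqref{extremalPsi} turns~\eqref{Kolm-add-th} into an equality. With $f_e$ as in~\eqref{extremalFunction}, one has $\psi(t)=\tfrac12\int_0^h g_{w,h}^{p'-1}(u)\,du-f_e(t)$ for $|t|_K\le h$ and $\psi(t)=-\tfrac12\int_0^h g_{w,h}^{p'-1}(u)\,du$ for $|t|_K>h$; these agree continuously across $\{|t|_K=h\}$, so $\psi$ is weakly differentiable with $\nabla\psi=-\nabla f_e$ on $h(\KC)$ and $\nabla\psi=0$ off $hK$, whence $\||\nabla\psi|_{\polarK}\|_{L_p(C)}=\||\nabla f_e|_{\polarK}\|_{L_p(h(\KC))}$. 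Since $g_{w,h}^{p'-1}\ge0$, $|\psi|$ is maximal at the origin and off $hK$, so $\|\psi\|_{L_\infty(C)}=\tfrac12\int_0^h g_{w,h}^{p'-1}(u)\,du$, finite by H\"older's inequality precisely because $p>d$; thus $\psi\in L^1_{\infty,p}(C)$. Letting $x\to\theta$ within $C$, continuity of $\psi$ gives $\psi(x)-\psi(x+t)\to f_e(t)$ for $|t|_K\le h$ and $\psi(x)-\psi(x+t)\to\int_0^h g_{w,h}^{p'-1}(u)\,du$ for $|t|_K>h$, so (granting the passage to the limit discussed below)
\[
\|D_{K,w}\psi\|_{L_\infty(C)}\ge\int\limits_{h(\KC)}w(|t|_K)(f_e(t)-f_e(\theta))\,dt+\left(\int_0^h g_{w,h}^{p'-1}(u)\,du\right)d\,\mes(\KC)\int_h^\infty w(\rho)\rho^{d-1}\,d\rho.
\]
By the equality case of Theorem~\ref{th::mainInequality} the first summand equals $\left(d\,\mes(\KC)\right)^{\frac1{p'}}\|g_{w,h}\|_{\weightedLp_{p'}(0,h)}\||\nabla f_e|_{\polarK}\|_{L_p(h(\KC))}$, and plugging in the values of $\|\psi\|_{L_\infty(C)}$ and $\||\nabla\psi|_{\polarK}\|_{L_p(C)}$ found above shows the right-hand side of the last display is exactly the right-hand side of~\eqref{Kolm-add-th} at $f=\psi$; combined with the upper bound of the first paragraph this gives equality, and the factor $a\in\RR$ follows by homogeneity.

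Finally, for $w\in\weightsClass_{p'}^*(0,h)$ and arbitrary $K\in\setsClass$ the same argument works verbatim, since Theorem~\ref{th::mainInequality} is already stated in that generality and the radial integration formula holds for every $K\in\setsClass$. The step I expect to be the main obstacle is the passage to the limit invoked above: showing $(D_{K,w}\psi)(x)\to(D_{K,w}\psi)(\theta)$ as $x\to\theta$ in $C$, where $\theta$ in general does not belong to $C$. This requires dominating the integrand $w(|t|_K)|\psi(x)-\psi(x+t)|$ uniformly for $x$ near $\theta$ by a fixed $w$-integrable function, which one obtains from a Morrey-type modulus estimate $|f(x+t)-f(x)|\le c\,|t|_K^{1-d/p}\||\nabla f|_{\polarK}\|_{L_p(C)}$ (available because $p>d$) near $t=\theta$ together with the defining integrability of $\weightsClass_{p'}(0,h)$, and from dominated convergence for the tail via $w\in\weightedLp_1(h,\infty)$. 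In contrast, once Theorem~\ref{th::mainInequality} and the radial formula are available, the upper bound is routine.
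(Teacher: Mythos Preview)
Your upper bound argument is exactly the paper's: split $D_{K,w}f=(D_{K,w}-D_{K,w,h})f+D_{K,w,h}f$, apply Theorem~\ref{th::mainInequality} to the first piece (after translating by $x$) and the norm computation of Lemma~\ref{l::norm-of-truncated} to the second. For sharpness when $K$ is a polytope you also follow the paper's route, correctly identifying the continuity of $D_{K,w}\psi$ at $\theta$ as the delicate step.

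Two points deserve comment. First, your proposed justification of that continuity via a Morrey bound $|\psi(x+t)-\psi(x)|\le c\,|t|_K^{1-d/p}$ plus dominated convergence would require $\int_0^h w(\rho)\rho^{d-d/p}\,d\rho<\infty$, and this is not an obvious consequence of $w\in\weightsClass_{p'}(0,h)$ alone. The paper's Lemma~\ref{l::derivativesContinuity} avoids this: for the tail $D_{K,w,h}\psi$ it uses only uniform continuity of $\psi$ together with $w\in\weightedLp_1(h,\infty)$, and for the local part it simply re-applies Theorem~\ref{th::mainInequality} to the increment $\psi(\cdot)-\psi(\cdot+z)$, obtaining
\[
|(D_{K,w}-D_{K,w,h})\psi(\theta)-(D_{K,w}-D_{K,w,h})\psi(z)|\le C\,\|\,|\nabla\psi(\cdot)-\nabla\psi(\cdot+z)|_{\polarK}\|_{L_p(h\KC)}\to 0.
\]
This sidesteps any extra integrability hypothesis on $w$ and is the cleaner route.

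Second, your claim that for arbitrary $K\in\setsClass$ ``the same argument works verbatim'' is a genuine gap. Theorem~\ref{th::mainInequality} for general $K$ asserts only that the inequality is \emph{sharp}, not that the specific function $f_e$ (and hence $\psi_{K,w,h}$) attains equality; the identity $|\nabla f_e(y)|_{\polarK}=g_{w,h}(|y|_K)$ from Lemma~\ref{l::extremalFunction} is proved only for polytopes. Likewise, the radial formula you invoke for the tail (Lemma~\ref{l::radialFunctionIntegral}) is stated for polytopes. The paper handles the general $K$ case by approximating from inside by polytopes $K_\varepsilon\subset K$ and using the \emph{family} $\psi_{K_\varepsilon,w,h}$ as an extremal sequence, appealing to Remark~\ref{r::embeddedPolytopes} for the tail operator; one does not get a single extremal function in this case, only sharpness.
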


If in addition the weight $w$ is a power function, then Theorem~\ref{th::additiveKolmogorov} implies the following Landau-Kolmogorov type inequality in the multiplicative form.

\begin{corollary}\label{c::multiplicativeKolmogorov}
Let $p\in (d,\infty]$, $C\in \conesClass$, $K\in \setsClass$, and 
\begin{equation}\label{powerW}
    w(t) = \frac {1}{t^{d+\gamma}}, t>0, \text{ with }  0<\gamma < 1-\frac dp.
\end{equation}
 Then  for each  $f\in L_{\infty,p}^1(C)$ one has
\begin{equation}\label{multInequality}
\|D_{K,w} f\|_{L_\infty(C)}\le \frac{X^{p'} + YZ}{X^{(p'-1)\alpha}\cdot Z^{1-\alpha}}
\cdot\|f\|_{L_\infty(C)}^{1-\alpha}\cdot
\||\nabla f|_{\polarK}\|_{L_p(C)}^{\alpha},
\end{equation}
where
$
\alpha = \frac{p\gamma}{p-d}
$, $X = \left(d\cdot\mes(\KC)\right)^{\frac 1{p'}} \left\|g_{w,1}\right\|_{\weightedLp_{p'}(0,1)}$,
$$
 Y=2d \cdot \mes(\KC) \int\limits_1^\infty w(\rho)\rho^{d-1}d\rho = \frac{2d \cdot \mes(\KC)}{\gamma}
 $$
 and $Z =  \frac{1}{2}\int\limits_0^{1}g_{w,1}^{p'-1}(u)du.
$
The inequality is sharp. If $K$ is a polytope, then for each $h>0$ it becomes equality on the function $\psi_{K,w,h}$ defined in~\eqref{extremalPsi}.
\end{corollary}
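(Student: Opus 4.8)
The plan is to derive \eqref{multInequality} from the additive inequality \eqref{Kolm-add-th} of Theorem~\ref{th::additiveKolmogorov}, which holds for every $h>0$, by optimizing the parameter $h$. \textbf{Step 1 (scaling).} For $w(t)=t^{-(d+\gamma)}$ one computes $g_{w,h}(u)=\gamma^{-1}u^{-(d-1)}(u^{-\gamma}-h^{-\gamma})$, so $g_{w,h}(hu)=h^{-(d-1+\gamma)}g_{w,1}(u)$; the substitution $t=hu$ in the norm $\|\cdot\|_{\weightedLp_{p'}}$ gives $\|g_{w,h}\|_{\weightedLp_{p'}(0,h)}=h^{\beta}\|g_{w,1}\|_{\weightedLp_{p'}(0,1)}$ with $\beta:=\tfrac d{p'}-(d-1+\gamma)=1-\tfrac dp-\gamma>0$, and $\int_h^\infty w(\rho)\rho^{d-1}\,d\rho=h^{-\gamma}/\gamma$. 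Hence \eqref{Kolm-add-th} reads, with $X,Y$ as in the statement,
$$\|D_{K,w}f\|_{L_\infty(C)}\le X h^{\beta}\||\nabla f|_{\polarK}\|_{L_p(C)}+Y h^{-\gamma}\|f\|_{L_\infty(C)},\qquad h>0.$$

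\textbf{Step 2 (minimization and reduction to an identity).} Put $A=\||\nabla f|_{\polarK}\|_{L_p(C)}$, $B=\|f\|_{L_\infty(C)}$. Elementary calculus gives that the minimum over $h>0$ of the right-hand side above is attained at $h_*=\big(\gamma YB/(\beta XA)\big)^{1/(\beta+\gamma)}$ and equals $X^{\alpha}Y^{1-\alpha}\big[(\gamma/\beta)^{1-\alpha}+(\beta/\gamma)^{\alpha}\big]A^{\alpha}B^{1-\alpha}$, where $\alpha=\gamma/(\beta+\gamma)=p\gamma/(p-d)$ and $\beta+\gamma=1-d/p$. This is already a multiplicative inequality of the required form; to match the constant, one uses $\tfrac{P+Q}{P^{\alpha}Q^{1-\alpha}}=(P/Q)^{1-\alpha}+(Q/P)^{\alpha}$ with $P=X^{p'}$, $Q=YZ$ to see that $X^{\alpha}Y^{1-\alpha}[(\gamma/\beta)^{1-\alpha}+(\beta/\gamma)^{\alpha}]=\frac{X^{p'}+YZ}{X^{(p'-1)\alpha}Z^{1-\alpha}}$ reduces to the identity $X^{p'}/(YZ)=\gamma/\beta$, i.e.
$$\Big(1-\tfrac dp-\gamma\Big)\int_0^1 u^{d-1}g_{w,1}^{p'}(u)\,du=\int_0^1 g_{w,1}^{p'-1}(u)\,du .$$

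\textbf{Step 3 (the identity --- the main obstacle).} I expect verifying this last identity to be the crux. It follows from a single integration by parts: with $\phi(u):=u^{d-1}g_{w,1}(u)=\gamma^{-1}(u^{-\gamma}-1)$ we have $\phi'(u)=-u^{-\gamma-1}$, $\phi(1)=0$, $u^{d-1}g_{w,1}^{p'}=u^{-(d-1)(p'-1)}\phi^{p'}$ and $g_{w,1}^{p'-1}=u^{-(d-1)(p'-1)}\phi^{p'-1}$; integrating $\int_0^1 u^{-(d-1)(p'-1)}\phi^{p'}\,du$ by parts against $du$, then re-expanding the resulting integral via $u^{-\gamma}=\gamma\phi+1$ and collecting the copies of $\int_0^1 u^{d-1}g_{w,1}^{p'}$ and $\int_0^1 g_{w,1}^{p'-1}$, one obtains $\int_0^1 g_{w,1}^{p'-1}=\big(\tfrac{1-(d-1)(p'-1)}{p'}-\gamma\big)\int_0^1 u^{d-1}g_{w,1}^{p'}$; since $\tfrac{1-(d-1)(p'-1)}{p'}=1-\tfrac dp$ this is the identity. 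The boundary terms in the integration by parts vanish \emph{precisely} because $\gamma<1-\tfrac dp$, equivalently $1-(d-1)(p'-1)-\gamma p'=\tfrac{p(1-\gamma)-d}{p-1}>0$; the degenerate case $p=\infty$ ($p'=1$) is checked directly and amounts to $(1-\gamma)\int_0^1 u^{d-1}g_{w,1}(u)\,du=1$.

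\textbf{Step 4 (sharpness).} The infimum over $h$ is attained, and by Theorem~\ref{th::additiveKolmogorov} inequality \eqref{Kolm-add-th} is an equality on $a\,\psi_{K,w,h}$ for a polytope $K$ and every $h>0$. Using the extremal function of Theorem~\ref{th::mainInequality} one finds $\||\nabla\psi_{K,w,h}|_{\polarK}\|_{L_p(C)}=(d\cdot\mes(\KC))^{1/p}h^{\beta(p'-1)}\|g_{w,1}\|_{\weightedLp_{p'}(0,1)}^{p'-1}$ and $\|\psi_{K,w,h}\|_{L_\infty(C)}=h^{1-(d-1+\gamma)(p'-1)}Z$; substituting into the critical-point relation $\beta X h^{\beta}A=\gamma Y h^{-\gamma}B$ and using $\beta X^{p'}=\gamma YZ$ shows that the minimizing scale for $f=\psi_{K,w,h}$ is exactly $h$, whence \eqref{multInequality} is an equality on $a\,\psi_{K,w,h}$ for every $a\in\RR$ and every $h>0$. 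For a general $K\in\setsClass$ one repeats the argument via the $\weightsClass_{p'}^*$-clause of Theorem~\ref{th::additiveKolmogorov}, which applies because $w(t)=t^{-(d+\gamma)}\in\weightsClass_{p'}^*(0,1)$ (here $w_\nu=(\nu^{-(d+\gamma)}-1)\,w$, so $\|g_{w_\nu,1}\|_{\weightedLp_{p'}(0,1)}\to0$ as $\nu\to1-$), giving sharpness of \eqref{multInequality} for all $K$.
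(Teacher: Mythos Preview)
Your proof is correct and follows the same overall strategy as the paper: derive the multiplicative inequality from the additive one of Theorem~\ref{th::additiveKolmogorov} by optimizing over $h>0$, and check sharpness on $\psi_{K,w,h}$. The executions differ slightly. The paper simply substitutes the specific value
\[
h=\left(\frac{\|f\|_{L_\infty(C)}}{\||\nabla f|_{\polarK}\|_{L_p(C)}}\cdot\frac{X^{p'-1}}{Z}\right)^{p/(p-d)}
\]
into~\eqref{Kolm-add-th} and computes the resulting bound directly, obtaining the constant $(X^{p'}+YZ)/(X^{(p'-1)\alpha}Z^{1-\alpha})$ without ever invoking the relation $\beta X^{p'}=\gamma YZ$; you instead locate the critical point by calculus, arrive at the constant in the form $X^\alpha Y^{1-\alpha}\big[(\gamma/\beta)^{1-\alpha}+(\beta/\gamma)^\alpha\big]$, and then establish the identity $\beta X^{p'}=\gamma YZ$ via an integration by parts in order to match the stated form. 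Both routes are valid; yours is a bit longer but makes transparent \emph{why} the paper's particular choice of $h$ is the minimizer, while the paper's direct substitution sidesteps the identity entirely. For sharpness in the polytope case the paper computes $\|\psi_{K,w,h}\|_{L_\infty(C)}$, $\||\nabla\psi_{K,w,h}|_{\polarK}\|_{L_p(C)}$ and $\|D_{K,w}\psi_{K,w,h}\|_{L_\infty(C)}$ explicitly (see~\eqref{psiNorm}--\eqref{DpsiNorm}) and verifies equality by direct substitution; your argument via the critical-point relation together with $\beta X^{p'}=\gamma YZ$ is equivalent.
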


\section{Ostrowski type inequality}\label{sec::ostrowski}
\subsection{Auxiliarly results}
 We need the following lemma, which follows from the results in~\cite[Chapter 6.9]{lieb2001}.
\begin{lemma}\label{l::LagrangeTh}
Suppose $Q\subset \Rd$ is an open convex set, $f\in W^{1,p}(Q)$ and $x,y\in Q$. Then
$$f(y)-f(x)=\int\limits_0^1\left(y-x,\nabla f[(1-t)x+ty]\right)dt.$$
\end{lemma}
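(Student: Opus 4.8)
\emph{Proof plan.} The identity is nothing but the one-dimensional fundamental theorem of calculus for $t\mapsto f((1-t)x+ty)$, lifted from $C^1$ to $W^{1,p}$; since it is linear in $f$, the plan is to establish it first for smooth $f$ and then to remove the smoothness by mollification. Note that under the standing assumption $p\in(d,\infty]$ the function $f$ has, after modification on a null set, a continuous representative (Morrey's embedding), which we fix, so that $f(y)-f(x)$ is meaningful. For $f\in C^1$ on a convex open set containing $[x,y]$, I would put $\varphi(t):=f((1-t)x+ty)$, observe that $\varphi\in C^1[0,1]$ with $\varphi'(t)=(y-x,\nabla f((1-t)x+ty))$ by the chain rule, and read the formula off from $\varphi(1)-\varphi(0)=\int_0^1\varphi'(t)\,dt$.

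To pass to a general $f\in W^{1,p}(Q)$, I would pick a bounded open neighbourhood $Q'$ of $[x,y]$ with $\overline{Q'}\subset Q$ and mollify, $f_\varepsilon:=f*\rho_\varepsilon$; for small $\varepsilon>0$ this is $C^\infty$ near $[x,y]$, it converges to $f$ uniformly on $Q'$ (because the fixed representative is continuous, or by the embedding for $p>d$), and $\nabla f_\varepsilon\to\nabla f$ in $L_p(Q')$. Applying the smooth case to $f_\varepsilon$ and letting $\varepsilon\to0$, the left-hand side tends to $f(y)-f(x)$, and the whole matter reduces to passing to the limit on the right-hand side.

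The hard part will be precisely this passage to the limit in $\int_0^1(y-x,\nabla f_\varepsilon((1-t)x+ty))\,dt$: the segment $\{(1-t)x+ty:t\in[0,1]\}$ has $d$-dimensional measure zero, so $L_p(Q')$-convergence of the gradients controls nothing along it directly. The way around this---which is exactly what is borrowed from \cite[Chapter~6.9]{lieb2001}---is the absolute continuity of (the precise representative of) a Sobolev function along almost every line: I would average the already-established identity over the parallel translates $\{(1-t)x+ty+z:t\in[0,1]\}$ for $z$ in a small ball, invoke Fubini to see it holds for a.e.\ $z$, and then let $z\to0$ through admissible values, using continuity of $f$ for the left-hand side and identifying the limit of the right-hand side with $\int_0^1(y-x,\nabla f((1-t)x+ty))\,dt$, the gradient read off its precise representative along the segment. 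Granting this line-restriction step---the only genuine obstacle---the lemma follows; everything else is the routine calculus of the smooth case together with the Sobolev embedding.
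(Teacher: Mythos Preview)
Your plan is sound, and in fact it goes further than the paper does: the paper gives no proof of this lemma at all, merely stating that it ``follows from the results in~\cite[Chapter~6.9]{lieb2001}''. Your sketch---smooth case via the chain rule, then mollification, with the line-restriction step for Sobolev functions outsourced to exactly the same reference---is the standard argument behind that citation, so there is nothing to compare; you have simply unpacked what the paper left packed.
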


The following consequence of the coarea (see e.g.~\cite[Theorem~3.2.12]{federer1969}) formula is also needed.
\begin{lemma}\label{l::curvelinearCoords}
Let $C\in \conesClass$,  $K\in\setsClass$ be a polytope, $\gamma$ be a face of $\KC$ that does not contain the origin $\theta$ and  $\delta$ be the distance between $\theta$
and the plane that contains $\gamma$. Set
\begin{equation}\label{GammaSet}
\Gamma:= {\rm conv}(\{\theta\}\cup \gamma).
\end{equation}
 For all $h>0$ and integrable $g\colon h\KC\to\mathbb{R}$ 
$$\int\limits_{h\Gamma}g(x)dx=\delta\int\limits_0^h \rho^{d-1}	\int\limits_{\gamma}g(\rho y)dy d\rho.$$
\end{lemma}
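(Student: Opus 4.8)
The plan is to derive the identity from the coarea formula \cite[Theorem~3.2.12]{federer1969} applied to a single linear functional whose level sets inside $h\Gamma$ are precisely the dilates $\rho\gamma$, $\rho\in(0,h]$.

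First I would fix the geometry. Let $H$ be the hyperplane containing $\gamma$ (the ``plane'' from the statement), and choose the unit normal $n$ to $H$ so that the polytope $\KC$ lies in the closed half-space $\{x\in\Rd\colon (x,n)\le\delta\}$; since $\theta\in\KC\setminus H$, this choice makes $(x,n)=\delta$ for every $x\in H$, with $\delta>0$ the distance from $\theta$ to $H$. From $\Gamma=\mathrm{conv}(\{\theta\}\cup\gamma)$ and the convexity of $\gamma$ one gets $h\Gamma=\{\rho y\colon \rho\in[0,h],\ y\in\gamma\}$, and since $(\rho y,n)=\rho\delta$ for $y\in\gamma$, the linear functional $\varphi(x):=(x,n)/\delta$ maps $h\Gamma$ onto $[0,h]$ and satisfies $\varphi^{-1}(\rho)\cap h\Gamma=\rho\gamma$ for each $\rho\in(0,h]$ (the remaining fibre, over $\rho=0$, is the $\mathcal{H}^{d-1}$-null set $\{\theta\}$).

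Next I would apply the coarea formula to $\varphi$, which is Lipschitz with constant gradient $\nabla\varphi\equiv n/\delta$, so its coarea factor is the constant $J_1\varphi=|\nabla\varphi|=1/\delta$. Reducing to the case $g\ge 0$ by splitting $g=g^+-g^-$, the formula on the set $h\Gamma$ reads
$$\frac1\delta\int_{h\Gamma}g(x)\,dx=\int_{\RR}\Big(\int_{\varphi^{-1}(\rho)\cap h\Gamma}g\,d\mathcal{H}^{d-1}\Big)\,d\rho=\int_0^h\Big(\int_{\rho\gamma}g\,d\mathcal{H}^{d-1}\Big)\,d\rho,$$
the second equality being the level-set computation above. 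Finally, for each fixed $\rho>0$ the dilation $y\mapsto\rho y$ is a similarity of ratio $\rho$ taking $\gamma\subset H$ onto $\rho\gamma$, hence it multiplies $(d-1)$-dimensional Hausdorff measure by $\rho^{d-1}$, so $\int_{\rho\gamma}g\,d\mathcal{H}^{d-1}=\rho^{d-1}\int_\gamma g(\rho y)\,dy$ (here $dy$ is the surface measure on $\gamma$, as in the statement). Substituting this into the previous display and multiplying by $\delta$ gives exactly the asserted formula.

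The one step requiring real care is the level-set identity $\varphi^{-1}(\rho)\cap h\Gamma=\rho\gamma$, i.e.\ that the affine slice $\{(x,n)=\rho\delta\}$ meets $h\Gamma$ in nothing beyond $\rho\gamma$; this follows from the representation $x=\sigma y$ ($\sigma\in[0,h]$, $y\in\gamma$) of the points of $h\Gamma$ once one knows that $(y,n)$ is constant on $\gamma$, which is where the hypotheses that $\gamma$ is a facet of $\KC$ and $\theta\notin H$ enter. (One could instead bypass the coarea theorem and compute directly the Jacobian of the parametrisation $(\rho,y)\mapsto\rho y$ of $h\Gamma$, which works out to $\delta\rho^{d-1}$ times the surface element of $\gamma$; the coarea route is merely shorter and matches the cited reference.)
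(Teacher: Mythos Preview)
Your proof is correct and follows essentially the same route as the paper: both introduce the linear functional $\varphi(x)=(x,n)/\delta$ (the paper calls it $u$), apply the coarea formula using that $|\nabla\varphi|=1/\delta$, identify the fibres $\varphi^{-1}(\rho)\cap h\Gamma=\rho\gamma$, and then rescale the $(d-1)$-dimensional integral over $\rho\gamma$ to an integral over $\gamma$. Your write-up is in fact more careful than the paper's on the points that matter (the level-set identity, the $\rho=0$ fibre, and the reduction to $g\ge 0$).
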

\begin{proof}
Let $n$ be the unit external normal of the face $\gamma$. Consider the function $u(x) = \frac{1}{\delta}(n,x)$. Then $|\nabla u(x)|_2 = \frac 1\delta$ for all $x\in\Rd$, $\gamma\subset\{x\in\Rd\colon u(x) =1\}$, and using the coarea formula we obtain 
\begin{gather*}
\int\limits_{h\Gamma}g(x)dx 
= \delta \int\limits_{h\Gamma}g(x)|\nabla u(x)|_2dx 
=
\delta \int\limits_{0}^h\int\limits_{\rho\cdot\gamma} g(y)dyd\rho 
=
{\delta}\int\limits_0^h \rho^{d-1}	\int\limits_{\gamma}g(\rho z)dz d\rho.
\end{gather*}
\end{proof}

We need the following lemma.
\begin{lemma}\label{l::radialFunctionIntegral}
If $C\in \conesClass$, $h>0$, $p\in [1,\infty)$ and $w\in \weightedLp_p(0,h)$, then for each polytope $K\in \setsClass$ 
$$
\int\limits_{h\KC} w^p(|y|_K)dy = d\cdot \mes(\KC)\int\limits_{0}^hw^p(t) t^{d-1}dt.
$$
\end{lemma}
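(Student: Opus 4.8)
The plan is to reduce the integral over $h\KC$ to a sum of integrals over the cone-shaped pieces $h\Gamma$ obtained by joining the origin to each facet of the polytope $\KC$ that does not contain $\theta$, and then apply Lemma~\ref{l::curvelinearCoords} to each piece. First I would note that, since $K$ is a polytope and $C\in\conesClass$, the set $\KC$ is a polytope with $\theta\in{\rm int}\,(\KC)$ (or on the boundary, but in any case $\theta$ is a vertex of the cone structure). Let $\gamma_1,\dots,\gamma_m$ be the facets of $\KC$ that do not contain $\theta$, let $\delta_i$ be the distance from $\theta$ to the affine hull of $\gamma_i$, and set $\Gamma_i={\rm conv}(\{\theta\}\cup\gamma_i)$ as in~\eqref{GammaSet}. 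The key geometric fact is that $\KC=\bigcup_{i=1}^m\Gamma_i$ with pairwise intersections of measure zero, because every point $x\in\KC\setminus\{\theta\}$ lies on a unique ray from $\theta$, and that ray meets the boundary $\partial(\KC)$ at exactly one point, which belongs to some facet $\gamma_i$ not through $\theta$; the segment from $\theta$ to that boundary point lies in $\Gamma_i$.

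Next I would apply Lemma~\ref{l::curvelinearCoords} with $g(y)=w^p(|y|_K)$ on each $h\Gamma_i$. Since $g$ is radial with respect to $|\cdot|_K$, for $y\in\gamma_i$ we have $|\rho y|_K=\rho|y|_K$, so the inner integral $\int_{\gamma_i}g(\rho y)\,dy=\int_{\gamma_i}w^p(\rho|y|_K)\,dy$. Summing over $i$ and using the decomposition gives
\begin{equation*}
\int\limits_{h\KC}w^p(|y|_K)\,dy=\int\limits_0^h\rho^{d-1}\sum_{i=1}^m\delta_i\int\limits_{\gamma_i}w^p(\rho|y|_K)\,dy\,d\rho.
\end{equation*}
To finish I would apply the same decomposition identity with the particular choice $w^p(t)\equiv\mathbf{1}$ (formally: run the computation with $g\equiv 1$, which is legitimate since a constant is integrable on the bounded set $h\KC$ for $h<\infty$), which yields $\mes(h\KC)=h^d\mes(\KC)=\int_0^h\rho^{d-1}\sum_i\delta_i\mes(\gamma_i)\,d\rho=\frac{h^d}{d}\sum_i\delta_i\mes(\gamma_i)$, hence $\sum_i\delta_i\mes(\gamma_i)=d\cdot\mes(\KC)$.

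The remaining issue is to handle the fact that $|y|_K$ is not constant on a facet $\gamma_i$ (a facet of $\KC$ need not be a facet of $K$, so it need not lie on a level set of $|\cdot|_K$), which means the inner integral $\int_{\gamma_i}w^p(\rho|y|_K)\,dy$ does not immediately collapse to $w^p(\rho)\cdot\mes(\gamma_i)$. I would resolve this by working not with facets of $\KC$ but with the radial parametrization directly: write $\KC=\{sy:y\in\partial_1,\ 0\le s\le 1\}$ where $\partial_1=\{y\in\partial(\KC):\text{the ray }\theta y\text{ exits }\KC\text{ at }y\}$ — equivalently, triangulate $\partial(\KC)$ into simplices, apply Lemma~\ref{l::curvelinearCoords} on each resulting cone $h\Gamma$, and then inside each $\Gamma$ substitute $t=\rho|y|_K$. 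Concretely, after Lemma~\ref{l::curvelinearCoords} each term is $\delta\int_0^h\rho^{d-1}\int_\gamma w^p(\rho|y|_K)\,dy\,d\rho$; swapping the order of integration and, for fixed $y\in\gamma$, changing variables $t=\rho|y|_K$ (so $\rho=t/|y|_K$, $d\rho=dt/|y|_K$) turns the $\rho$-integral into $|y|_K^{-d}\int_0^{h|y|_K}w^p(t)t^{d-1}\,dt$. This is the main obstacle, because the upper limit $h|y|_K$ depends on $y$; I expect it is circumvented by exploiting that $\gamma$ can be chosen to be a facet of $K$ itself intersected with $C$ when $K$ is a polytope — no, more robustly, by using the substitution globally: change variables in $\Rd$ via $y\mapsto y/|y|_K$ onto the "sphere" $\partial K$, giving the standard polar-type formula $\int_{h\KC}F(|y|_K)\,dy=d\,\mes(\KC)\int_0^h F(t)t^{d-1}\,dt$ for radial $F$, whose proof is exactly the Lemma~\ref{l::curvelinearCoords} computation carried out on a triangulation of $\partial K\cap\overline C$ with the observation that $|y|_K\equiv 1$ on $\partial K$. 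I would present the argument in this last, cleanest form: triangulate, apply Lemma~\ref{l::curvelinearCoords} with $\gamma\subset\partial K$ so that $|y|_K=1$ there, obtain $\delta\int_0^h\rho^{d-1}w^p(\rho)\,\mes(\gamma)\,d\rho$ per piece, sum, and identify $\sum\delta\,\mes(\gamma)=d\,\mes(\KC)$ by the $w\equiv 1$ normalization.
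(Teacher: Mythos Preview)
Your final argument is correct and is essentially the paper's proof: decompose $\KC$ into cones $\Gamma_i$ over the facets $\gamma_i$ not containing $\theta$, apply Lemma~\ref{l::curvelinearCoords}, use $|z|_K=1$ for $z\in\gamma_i$, and sum via the pyramid formula $\delta_i\,\mes\gamma_i=d\,\mes\Gamma_i$.

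The detour in the middle is unnecessary, however, because your worry that ``$|y|_K$ is not constant on a facet $\gamma_i$'' is unfounded. The boundary of $\KC=K\cap C$ lies in $\partial K\cup\partial C$. Since $C$ is a cone with apex $\theta$, every face of $C$ contains $\theta$; since $\theta\in{\rm int}\,K$, no face of $K$ contains $\theta$. Hence the facets of $\KC$ that do \emph{not} contain $\theta$ are precisely those lying on $\partial K$ (possibly only a piece of a facet of $K$, clipped by $\overline C$, but still inside $\partial K$). So $|y|_K=1$ on each $\gamma_i$ from the outset, and the inner integral collapses immediately to $w^p(\rho)\,\mes\gamma_i$. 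This is exactly what the paper does in one line, writing $w^p(|\rho z|_K)=w^p(\rho)$ for $z\in\gamma_s$ without further comment. Once you see this, there is no ``main obstacle'' and no need for the alternative radial parametrizations you sketch.
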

\begin{proof}
Let $\gamma_1,\ldots, \gamma_k$ be all faces of the polytope $\KC$ that do not contain $\theta$, $\delta_1,\ldots, \delta_k$ be the distances from corresponding planes to the origin. Set $\Gamma_s = {\rm conv}(\{\theta\}\cup\gamma_s)$, $s=1,\ldots, k$. Then using Lemma~\ref{l::curvelinearCoords}, we obtain
\begin{multline*}
\int\limits_{h\KC}w^p(|x|_{K})dx  
=
\sum\limits_{s=1}^k\int\limits_{h\Gamma_s}w^p(|x|_{K})dx 
=
\sum\limits_{s=1}^k\delta_s\int\limits_0^h\rho^{d-1}\int\limits_{\gamma_s}w^p(|\rho z|_K)dzd\rho
\\=
\sum\limits_{s=1}^k d\cdot\mes\Gamma_s\int\limits_0^h\rho^{d-1} w^p(\rho )d\rho
=
d\cdot \mes(\KC)\int\limits_{0}^hw^p(\rho) \rho^{d-1}d\rho.
\end{multline*}
\end{proof}

\begin{lemma}\label{l::faceNormalNorm}
Let $K\in \setsClass$ be a polytope, $\gamma$ be its face, $\delta$ be the distance between the origin $\theta$ and the plane that contains $\gamma$, and $n$ be the external unit normal of the face $\gamma$. Then $|n|_{\polarK} = \delta$.
\end{lemma}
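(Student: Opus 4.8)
The plan is to unwind both sides using the definitions of the polar set and the face geometry. Recall $|n|_{\polarK} = \sup\{(x,n)\colon |x|_K \le 1\} = \sup\{(x,n)\colon x\in \overline{K}\}$, since $K$ is open and $|x|_K \le 1$ iff $x\in\overline{K}$. So I need to show that the maximum of the linear functional $x\mapsto (x,n)$ over $\overline K$ equals $\delta$.

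First I would observe that, since $\gamma$ is a face of $K$ with outer unit normal $n$, the whole polytope $\overline K$ lies in the closed half-space $\{x\colon (x,n)\le c\}$ for the constant $c$ determined by the supporting hyperplane containing $\gamma$; moreover the supremum $\sup_{x\in\overline K}(x,n)$ is attained exactly on $\gamma$ and equals $c$. Thus $|n|_{\polarK} = c$, where $c$ is characterized by: $\{x\colon (x,n)=c\}$ is the plane containing $\gamma$. It remains to identify $c$ with $\delta$, the distance from $\theta$ to that plane. Since $n$ is a \emph{unit} vector, the signed distance from the origin to the hyperplane $\{x\colon (x,n)=c\}$ is exactly $c$; because $\theta\in\operatorname{int}K$ and $\gamma$ is an exposed face with outer normal $n$, we have $(\theta,n)=0<c$, so this signed distance is positive and hence equals the (unsigned) distance $\delta$. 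Therefore $c=\delta$ and $|n|_{\polarK}=\delta$.

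The only mild subtlety — not really an obstacle — is making precise the standard fact that for a polytope $K$ with $\theta\in\operatorname{int}K$, each facet $\gamma$ with outer unit normal $n$ is exactly the set where the linear functional $(\cdot,n)$ attains its maximum over $\overline K$, and that this maximum value is positive; this follows immediately from the separating hyperplane description of the facets of a convex polytope. Given that, the computation $|n|_{\polarK}=\sup_{x\in\overline K}(x,n)=\delta$ is immediate, and one can even write it as a one-line displayed chain of equalities. I expect the proof to be three or four lines.
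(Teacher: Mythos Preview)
Your proposal is correct and follows essentially the same approach as the paper: both argue that convexity places $\overline{K}$ in the half-space $\{x:(x,n)\le\delta\}$, giving $|n|_{\polarK}\le\delta$, and then observe that any $y\in\gamma$ satisfies $|y|_K=1$ and $(y,n)=\delta$, giving the reverse inequality. The paper's version is just slightly terser, omitting the intermediate symbol $c$ and the explicit remark that $\delta>0$.
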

\begin{proof}
Since $K$ is convex, it lies from one side of the plane that contains $\gamma$, hence
$$
	|n|_{\polarK} =  \sup \{(x,n)\colon |x|_K\leq 1\}\leq \delta.
$$
On the other hand, if $y\in \gamma$, then $|y|_K = 1$ and  $(y,n) = \delta$.
\end{proof}

\begin{lemma}\label{l::extremalFunction}
Let $K\in \setsClass$ be a polytope, $h>0$, $p\in (d,\infty]$,   $g\in \weightedLp_p(0,h)$ and
\begin{equation*}
	f(y)= \int\limits_0^{|y|_{K}}g(u)du,\, y\in hK.
\end{equation*}
If $\gamma$ is a face of $K$ with external unit normal $n$,  $\delta$ is the distance between the origin $\theta$ and the plane that contains $\gamma$, and $\Gamma$ is defined by~\eqref{GammaSet}, then for  $y\in {\rm int\,}h\Gamma$
\begin{equation}\label{nablaFe}
\nabla f (y) = \frac{g(|y|_K)}{\delta}\cdot n.
\end{equation}
Moreover, for almost all $y\in hK$
\begin{equation}\label{nablaFeNorm}
|\nabla f(y)|_{\polarK} = g(|y|_K).
\end{equation}
If $p<\infty$, then $f \in W^{1,p}(h K)$.
\end{lemma}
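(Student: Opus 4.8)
The plan is to reduce the whole statement to a one‑dimensional fact about the primitive $G(s):=\int_0^s g(u)\,du$, exploiting that $f=G\circ|\cdot|_K$ is ``radial'' with respect to the norm $|\cdot|_K$ and that $|\cdot|_K$ is piecewise linear on the polytope $hK$. First I would fix the geometry: let $\gamma_1,\dots,\gamma_k$ be the facets of $K$ (none contains $\theta$, since $\theta\in{\rm int\,}K$), with outer unit normals $n_1,\dots,n_k$ and distances $\delta_1,\dots,\delta_k$ from $\theta$ to their affine hulls, and put $\Gamma_s:={\rm conv}(\{\theta\}\cup\gamma_s)$ as in~\eqref{GammaSet}. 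As in the proof of Lemma~\ref{l::radialFunctionIntegral}, the pyramids $h\Gamma_1,\dots,h\Gamma_k$ have pairwise disjoint interiors and cover $hK$ up to a null set. For $y\in{\rm int\,}h\Gamma_s$, writing $y=\mu z$ with $z\in{\rm relint\,}\gamma_s$ and $\mu\in(0,h)$, one has $|z|_K=1$ and $(z,n_s)=\delta_s$, hence $|y|_K=\mu=(y,n_s)/\delta_s$; so $|\cdot|_K$ is linear on each $h\Gamma_s$, with gradient $n_s/\delta_s$ there. I would also record that $p>d$ forces $g\in L_1(0,h)$ (Hölder's inequality against $t^{-(d-1)/(p-1)}$, integrable near $0$ since $p>d$), so $f=G\circ|\cdot|_K$ is well defined, bounded and continuous on $hK$, with $f(\theta)=0$.

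Next I would prove the gradient formula first for bounded $g$. Then $G$ is Lipschitz, hence $f$ is Lipschitz on the convex set $hK$ (because $|\cdot|_K$ is Lipschitz in the Euclidean norm), so $f\in W^{1,\infty}(hK)$ and its weak gradient agrees a.e.\ with its classical derivative. At a.a.\ points of ${\rm int\,}h\Gamma_s$ the latter equals $G'(|y|_K)\nabla|y|_K=g(|y|_K)\,n_s/\delta_s$: the one‑dimensional chain rule applies for a.a.\ such $y$ because $y\mapsto|y|_K$ is affine on $h\Gamma_s$ and, by Lemma~\ref{l::curvelinearCoords}, pushes Lebesgue measure forward to a measure absolutely continuous with respect to Lebesgue measure on $(0,h)$, so $|y|_K$ avoids the null set of non‑differentiability of $G$ for a.a.\ $y$. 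Together with $|n_s|_{\polarK}=\delta_s$ from Lemma~\ref{l::faceNormalNorm}, this gives~\eqref{nablaFe} and $|\nabla f(y)|_{\polarK}=g(|y|_K)$ a.e., i.e.~\eqref{nablaFeNorm}, for bounded $g$.

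For general $g\in\weightedLp_p(0,h)$ with $p<\infty$ (which also yields the Sobolev membership) I would truncate: set $g_m=\min(g,m)$ and $f_m=G_m\circ|\cdot|_K$ with $G_m(s)=\int_0^s g_m$. By the previous step $f_m\in W^{1,\infty}(hK)$ with $\nabla f_m=g_m(|\cdot|_K)\,n_s/\delta_s$ a.e.\ on $h\Gamma_s$. Since $0\le g_m\uparrow g$, monotone convergence gives $f_m\uparrow f$ pointwise, while $0\le f_m\le f\le\|g\|_{L_1(0,h)}$, so $f_m\to f$ in $L_p(hK)$. By Lemma~\ref{l::radialFunctionIntegral} (with $C=\Rd$) and Lemma~\ref{l::faceNormalNorm},
$$
\int_{hK}\bigl|\nabla f_m(y)-g(|y|_K)\tfrac{n_s}{\delta_s}\bigr|_{\polarK}^p\,dy=d\cdot\mes(K)\int_0^h\bigl(g(t)-g_m(t)\bigr)^p t^{d-1}\,dt\longrightarrow 0
$$
by dominated convergence (the integrand is dominated by $g^p(t)t^{d-1}\in L_1(0,h)$). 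Hence $(f_m)$ is Cauchy in $W^{1,p}(hK)$, and by completeness $f\in W^{1,p}(hK)$ with $\nabla f(y)=g(|y|_K)\,n_s/\delta_s$ a.e., which is~\eqref{nablaFe}, and~\eqref{nablaFeNorm} follows again from Lemma~\ref{l::faceNormalNorm}. For $p=\infty$ the weight $g$ need not be bounded and $f$ need not lie in $W^{1,\infty}(hK)$ — which is why Sobolev membership is asserted only for $p<\infty$ — but when $f$ is finite on $hK$ (in particular in the applications, where $p=\infty$ forces $g\equiv\mathrm{const}$), the same truncation gives the identities~\eqref{nablaFe}--\eqref{nablaFeNorm} with $f\in W^{1,1}(hK)$.

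I expect the only genuinely delicate point to be the identification of the distributional gradient of $f$ with the piecewise formula, i.e.\ verifying that no singular contribution appears along the $(d-1)$‑dimensional skeleton separating the pyramids $h\Gamma_s$. This is exactly what the Lipschitz‑plus‑continuity reduction for bounded $g$ (via Rademacher's theorem, using that $f$ is globally continuous on $hK$) handles cleanly; after that, the passage to the limit for unbounded $g$ needs only the radial integration identity of Lemma~\ref{l::radialFunctionIntegral}. The rest — the geometric identity $|y|_K=(y,n_s)/\delta_s$ on the pyramids and the norm computation $|n_s|_{\polarK}=\delta_s$ — is routine and already available from Lemmas~\ref{l::curvelinearCoords} and~\ref{l::faceNormalNorm}.
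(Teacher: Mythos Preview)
Your argument is correct and rests on the same geometric observation as the paper's proof: on each pyramid ${\rm int\,}h\Gamma_s$ one has $|y|_K=(y,n_s)/\delta_s$, so $|\cdot|_K$ is affine there and the chain rule together with Lemma~\ref{l::faceNormalNorm} yields~\eqref{nablaFe}--\eqref{nablaFeNorm}. The paper, however, proceeds more directly: it simply computes the partial derivatives $\partial f/\partial x_j$ at $y\in{\rm int\,}h\Gamma$ as the limit $\lim_{\eta\to 0}\eta^{-1}\int_{|y|_K}^{|y|_K+\eta n_j/\delta}g(u)\,du=g(|y|_K)n_j/\delta$ (implicitly invoking Lebesgue differentiation), and then cites Lemma~\ref{l::radialFunctionIntegral} for the $L_p$ membership of $|\nabla f|_{\polarK}$. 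Your truncation-to-bounded-$g$ step, passing through the Lipschitz/Rademacher case and then taking limits in $W^{1,p}$, is not in the paper; what it buys you is a clean identification of the \emph{distributional} gradient with the piecewise formula, addressing exactly the point you flag as delicate (no singular contribution along the skeleton). The paper's shortcut is terser but tacitly assumes this identification, so your route is the more careful of the two while being equivalent in substance.
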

\begin{proof}
Note that for all $y\in {\rm int\,}h\Gamma$,
\begin{equation}\label{yNorm}
|y|_K = \frac 1{\delta} (y,n).
\end{equation}
Let  $n=(n_1,\ldots, n_d)$ and for $j\in \{1,\ldots, d\}$,  $e_j$ be the $j$-th element of the basis in $\Rd$. Using~\eqref{yNorm}, we obtain
\begin{gather*}
\frac{\partial f}{\partial x_j}(y) 
=
\lim\limits_{\eta\to 0} \frac{1}{\eta}\int\limits_{|y|_K}^{|y+\eta e_j|_K}g(u)du
=
\lim\limits_{\eta\to 0} \frac{1}{\eta}\int\limits_{|y|_K}^{|y|_K + \frac{\eta}{\delta} n_j}g(u)du
=g(|y|_K)\cdot \frac{n_j}{\delta}.
\end{gather*}
This implies~\eqref{nablaFe}, and hence, by Lemma~\ref{l::faceNormalNorm}, equality~\eqref{nablaFeNorm}. Lemma~\ref{l::radialFunctionIntegral} implies that $|\nabla f(\cdot)|_{\polarK}\in L_p(hK)$, if $p<\infty$.
\end{proof}

\begin{lemma}\label{l::changeOrderTwice}
Let $C\in \conesClass$, $p\in (d,\infty]$, $K\in \setsClass$ be a polytope, $h>0$ and  $f\in W^{1,p}(h \KC)$. For each $w\in \weightsClass_{p'}(0,h)$ one has
\begin{equation}\label{changeOrderEquality}
\int\limits_{\,h\KC}\int\limits_0^1w(|y|_{K})|y|_{K}|\nabla f(ty)|_{\polarK}dtdy
=
\int\limits_{h\KC}|\nabla f(y)|_{\polarK} g_{w,h}(|y|_K)  dy.
\end{equation}
\begin{proof}
Let $\gamma$, $n$, $\delta$ and $\Gamma$ be as in Lemma~\ref{l::curvelinearCoords}. We split the set $h\KC$ into the simplices $h\Gamma$ and evaluate each of the obtained from the left-hand side of~\eqref{changeOrderEquality} summands. 
\begin{gather*}
\notag \int\limits_{\,\,h\Gamma}\int\limits_0^1w(|y|_K)|y|_{K}|\nabla f(ty)|_{\polarK}dtdy
\\ \notag 
\text{(by Lemma~\ref{l::curvelinearCoords})} 
=
\delta\int\limits_{\gamma}\int\limits_{0}^h\int\limits_0^1w(|\rho z|_K)\rho^{d-1}|\rho z|_K|\nabla f(\rho tz)|_{\polarK} dtd\rho dz
\\ \notag  \text{(substitution $t = \frac{s}{\rho}$; note that $|\rho z|_K= \rho$ for all $z\in \gamma$)}
\\ \notag =
\delta\int\limits_{\gamma}\int\limits_{0}^h\int\limits_0^\rho w(\rho)\rho^{d-1}|\nabla f(s z)|_{\polarK}dsd\rho dz
\\  \notag \text{(change the order of integration in two internal integrals)}
\\  =
\delta\int\limits_{\gamma}\int\limits_{0}^h|\nabla f(s z)|_{\polarK}\int\limits_s^h w(\rho)\rho^{d-1}d\rho dsdz 
=
\delta\int\limits_{0}^hs^{d-1}\int\limits_{\gamma} |\nabla f(s z)|_{\polarK}g_{w,h}(|sz|_K) dzds
\\ \notag\text{(by Lemma~\ref{l::curvelinearCoords})}
 =
\int\limits_{h\Gamma}|\nabla f(y)|_{\polarK} g_{w,h}(|y|_K) dy.
\end{gather*}
Summing all such equalities we obtain the required.
\end{proof}
\end{lemma}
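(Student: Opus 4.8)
The plan is to prove the identity one simplex at a time. Since $K$ is a polytope and $C$ a polyhedral cone, $\KC$ is a polytope, and each of its faces that does not contain $\theta$ must lie on $\partial K$: every facet of $\KC$ supported by a bounding hyperplane of the cone $C$ contains the apex $\theta$ (that hyperplane passes through $\theta$ and $\theta\in\overline{\KC}$), so the only faces avoiding $\theta$ come from the facet hyperplanes of $K$, which do not pass through $\theta\in{\rm int\,}K$. Writing $\Gamma_s={\rm conv}(\{\theta\}\cup\gamma_s)$ as in~\eqref{GammaSet} for these faces $\gamma_1,\dots,\gamma_k$, the simplices $h\Gamma_s$ tile $h\KC$ up to a null set, so it suffices to establish~\eqref{changeOrderEquality} with $h\KC$ replaced by a single $h\Gamma$ and then sum over $s$. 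The whole argument is a disguised one-dimensional Fubini computation: in the curvilinear coordinates of Lemma~\ref{l::curvelinearCoords} the double $(t,y)$ integral on the left collapses to an integral over a triangular region in two radial variables, whose reordering reproduces exactly the inner integral defining $g_{w,h}$ in~\eqref{defG}.

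Concretely, I would first apply Lemma~\ref{l::curvelinearCoords} to the left-hand side over $h\Gamma$, parametrizing $y=\rho z$ with $z\in\gamma$ and $\rho\in(0,h)$; since $\gamma\subset\partial K$ we have $|z|_K=1$ and hence $|\rho z|_K=\rho$, so the factor $w(|y|_K)|y|_K$ becomes $w(\rho)\rho$ while the coarea Jacobian contributes $\delta\rho^{d-1}$. Next I would substitute $s=\rho t$ in the innermost integral; the factor $\rho$ coming from $|y|_K$ cancels the $\rho^{-1}$ from $dt=\rho^{-1}ds$, leaving $\delta\int_\gamma\int_0^h\int_0^\rho w(\rho)\rho^{d-1}|\nabla f(sz)|_{\polarK}\,ds\,d\rho\,dz$. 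Interchanging the $\rho$- and $s$-integrations over the region $\{0<s<\rho<h\}$ replaces $\int_0^\rho(\cdots)\,ds$ by $\int_s^h(\cdots)\,d\rho$, which turns the inner weight integral into $\int_s^h w(\rho)\rho^{d-1}\,d\rho=s^{d-1}g_{w,h}(s)=s^{d-1}g_{w,h}(|sz|_K)$ by~\eqref{defG}. A final application of Lemma~\ref{l::curvelinearCoords} in reverse, now to the function $g(y)=|\nabla f(y)|_{\polarK}g_{w,h}(|y|_K)$, rewrites the result as $\int_{h\Gamma}|\nabla f(y)|_{\polarK}g_{w,h}(|y|_K)\,dy$, and summing over the faces yields~\eqref{changeOrderEquality}.

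The step needing the most care is the interchange of the order of integration. All factors in the integrand, namely $w$, $|\nabla f|_{\polarK}$ and $g_{w,h}$, are nonnegative, so I would invoke Tonelli's theorem rather than Fubini's: the iterated integrals then agree as elements of $[0,+\infty]$ with no a priori integrability assumption, and the hypothesis $w\in\weightsClass_{p'}(0,h)$ (in particular $w\in\weightedLp_1(u,h)$ for every $u\in(0,h)$, so that $g_{w,h}(s)<\infty$ for $s>0$) together with $\nabla f\in L_p(h\KC)$ ensures the common value is finite. The remaining points, the geometric fact that the non-$\theta$ faces of $\KC$ lie on $\partial K$ so that $|z|_K=1$ there, and the measurability required to apply Lemma~\ref{l::curvelinearCoords} twice, are routine and can be disposed of quickly.
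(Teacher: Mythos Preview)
Your proposal is correct and follows essentially the same route as the paper: decompose $h\KC$ into the simplices $h\Gamma$, apply Lemma~\ref{l::curvelinearCoords}, substitute $s=\rho t$, swap the order of the two radial integrations, and apply Lemma~\ref{l::curvelinearCoords} again. The only differences are that you spell out why the non-$\theta$ faces of $\KC$ lie on $\partial K$ (so $|z|_K=1$) and justify the interchange via Tonelli, points the paper leaves implicit.
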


We need the following technical lemma. Its proof will be given in Appendix~\ref{app::secondSummandProof}.
\begin{lemma}\label{l::secondSummand}
Let $C\in \conesClass$, $p\in (d,\infty]$, $K\in \setsClass$ $h>0$, $f\in W^{1,p}(h \KC)$ and  $w\in \weightsClass_{p'}^*(0,h) $. For each $\varepsilon> 0$ choose a polytope $K_\varepsilon\in\setsClass$, $K\supset K_\varepsilon$ such that 
\begin{equation}\label{K_epsilon}
\max\left\{\sup\limits_{x\in K}\inf\limits_{y\in K_\varepsilon}|x-y|_2, \sup\limits_{x\in K_\varepsilon}\inf\limits_{y\in K}|x-y|_2\right\}<\varepsilon.    
\end{equation}
Then as $\varepsilon\to 0$,
\begin{equation}\label{secondSummand}
\int\limits_{hK_\varepsilon\cap C}[w(|y|_{K_\varepsilon}) - w(|y|_K)][f(y) - f(\theta)]dy\to 0 
\end{equation}
and
\begin{equation}\label{thirdSummand}
\int\limits_{\,h(K\setminus K_\varepsilon)\cap C}w(|y|_{K})[f(y) - f(\theta)]dy \to 0.
\end{equation}
\end{lemma}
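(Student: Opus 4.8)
The plan is to estimate the two integrals in \eqref{secondSummand} and \eqref{thirdSummand} using the Sobolev embedding, which on the set $h\KC$ (satisfying the cone condition) gives a uniform bound $|f(y) - f(\theta)| \le M$ for all $y \in h\KC$, with $M$ depending only on $\|f\|_{W^{1,p}(h\KC)}$, $h$, $K$ and $C$. Thus both integrals are controlled by $M$ times the appropriate $L_1$-norm of the weight over a small region, and the whole problem reduces to showing that these weighted measures go to zero as $\varepsilon \to 0$. For \eqref{thirdSummand} this is the easier half: since $|y|_K \le h$ on $h(K\setminus K_\varepsilon)\cap C$ and $K_\varepsilon \uparrow K$ in the sense of \eqref{K_epsilon}, the sets $h(K\setminus K_\varepsilon)\cap C$ shrink, and one needs $\int_{h(K\setminus K_\varepsilon)\cap C} w(|y|_K)\,dy \to 0$. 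This follows from absolute continuity of the integral once we know $w(|\cdot|_K) \in L_1(h\KC)$; the latter is guaranteed because $w \in \weightsClass_{p'}^*(0,h)$ implies in particular $w \in \weightedLp_{p'}(u,h)$ for every $u$, and near the origin the function $g_{w,h} \in \weightedLp_{p'}(0,h)$ encodes exactly the integrability of the tail $\int_u^h w(t)t^{d-1}dt$, which is what is needed to tame the singularity of $w$ at $\theta$ after passing to polar-type coordinates à la Lemma~\ref{l::curvelinearCoords} (here applied to polytopes approximating $K$, or directly to the polytope $K_\varepsilon$).

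For \eqref{secondSummand} the main work is to compare $w(|y|_{K_\varepsilon})$ with $w(|y|_K)$. The key geometric fact is that \eqref{K_epsilon} forces the two norms to be close in a multiplicative sense: since $K_\varepsilon \subset K$ we have $|y|_K \le |y|_{K_\varepsilon}$, and the Hausdorff closeness of the boundaries gives $|y|_{K_\varepsilon} \le \nu(\varepsilon)^{-1} |y|_K$ for some $\nu(\varepsilon) \to 1-$ as $\varepsilon \to 0$ (using that $\theta \in {\rm int}\,K$, so $K$ contains a fixed ball). Hence for $y \in hK_\varepsilon \cap C$ the point $|y|_K$ lies in $[\nu(\varepsilon)|y|_{K_\varepsilon}, |y|_{K_\varepsilon}]$, and therefore
$$
|w(|y|_{K_\varepsilon}) - w(|y|_K)| \le w_{\nu(\varepsilon)}(|y|_{K_\varepsilon}),
$$
where $w_\nu$ is the oscillation function from \eqref{wNu}. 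Consequently
$$
\left|\int\limits_{hK_\varepsilon\cap C}[w(|y|_{K_\varepsilon}) - w(|y|_K)][f(y) - f(\theta)]\,dy\right|
\le M \int\limits_{hK_\varepsilon\cap C} w_{\nu(\varepsilon)}(|y|_{K_\varepsilon})\,dy.
$$
Now $K_\varepsilon$ is a polytope, so Lemma~\ref{l::curvelinearCoords} (summed over the simplices $h\Gamma_s$ as in the proof of Lemma~\ref{l::radialFunctionIntegral}) converts the right-hand side into $d\cdot\mes(K_\varepsilon\cap C)\int_0^h w_{\nu(\varepsilon)}(t)\,t^{d-1}\,dt$, and since $w \in \weightsClass_{p'}^*(0,h)$ the defining property \eqref{defDiffIsSmall1} gives $\|g_{w_{\nu(\varepsilon)},h}\|_{\weightedLp_{p'}(0,h)} \to 0$; because $g_{w_\nu,h}(u) = u^{-(d-1)}\int_u^h w_\nu(t)t^{d-1}dt$ dominates the tail integral, and in particular controls $\int_0^h w_\nu(t)t^{d-1}dt = \lim_{u\to 0+} u^{d-1} g_{w_\nu,h}(u)$ together with the contribution near the origin, one deduces $\int_0^h w_{\nu(\varepsilon)}(t)\,t^{d-1}\,dt \to 0$. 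As $\mes(K_\varepsilon\cap C)$ stays bounded (by $\mes(\KC)$), this yields \eqref{secondSummand}.

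The main obstacle I expect is the bookkeeping around the singularity at $\theta$: one must be careful that $w_\nu$, which inherits a possible blow-up from $w$, is still $\weightedLp_1(u,h)$-integrable for each $u>0$ and that the quantities $\int_0^h w_\nu(t)t^{d-1}dt$ really are forced to zero by \eqref{defDiffIsSmall1} rather than merely the $g$-norm. Passing from ``$g_{w_\nu,h}$ small in $\weightedLp_{p'}$'' to ``$\int_0^h w_\nu(t)t^{d-1}dt$ small'' requires an argument: one can split $\int_0^h w_\nu t^{d-1}dt = \int_0^\eta + \int_\eta^h$, bound the second piece crudely by $\eta^{-(d-1)}$ times... no — rather, observe $\int_\eta^h w_\nu(t)t^{d-1}dt = \eta^{d-1}g_{w_\nu,h}(\eta) \le C_\eta \|g_{w_\nu,h}\|_{\weightedLp_{p'}(0,h)}$ by Hölder on a fixed interval $[\eta, 2\eta]$ (using $p' < \infty$; the case $p = \infty$, $p' = 1$ is direct), and handle $\int_0^\eta$ by noting $w_\nu \le w_\nu$ and using that the analogous integral for $w$ itself is finite, so it is uniformly small for $\eta$ small and all $\nu$ close to $1$ since $w_\nu \le 2\,\esssup$-type bound... this last uniformity is exactly where care is needed, and I would isolate it as the technical heart of the Appendix argument.
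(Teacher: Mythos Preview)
Your approach has a genuine gap in the treatment of \eqref{secondSummand}, and it is exactly the point you flag as ``the technical heart''. You bound $|f(y)-f(\theta)|\le M$ via Sobolev embedding and then try to show
\[
\int_{hK_\varepsilon\cap C} w_{\nu(\varepsilon)}(|y|_{K_\varepsilon})\,dy
= d\cdot\mes(K_\varepsilon\cap C)\int_0^h w_{\nu(\varepsilon)}(t)\,t^{d-1}\,dt \to 0.
\]
But for the principal example $w(t)=t^{-(d+\alpha)}$ one has $w_\nu(t)=(\nu^{-(d+\alpha)}-1)\,t^{-(d+\alpha)}$, so
\[
\int_0^h w_\nu(t)\,t^{d-1}\,dt = (\nu^{-(d+\alpha)}-1)\int_0^h t^{-1-\alpha}\,dt = +\infty
\]
for every $\nu<1$. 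The hypothesis $w\in\weightsClass_{p'}^*(0,h)$ controls only $\|g_{w_\nu,h}\|_{\weightedLp_{p'}(0,h)}$, and this does \emph{not} imply $\int_0^h w_\nu(t)t^{d-1}\,dt<\infty$; the splitting you sketch at the end cannot work because the contribution on $(0,\eta)$ is already infinite. The uniform Sobolev bound is simply too crude near $\theta$: it discards the information that $f(y)-f(\theta)$ vanishes to first order.

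The paper's proof repairs this by keeping the gradient representation (Lemma~\ref{l::LagrangeTh}),
\[
f(y)-f(\theta)=\int_0^1(y,\nabla f(ty))\,dt,
\]
which inserts an extra factor $|y|_{K_\varepsilon}$ into the integrand. After the change-of-order identity (Lemma~\ref{l::changeOrderTwice}) this converts $w_\nu(|y|_{K_\varepsilon})$ into $g_{w_\nu,h}(|y|_{K_\varepsilon})$; H\"older then gives the bound
\[
\||\nabla f|_{\polarK_\varepsilon}\|_{L_p(hK_\varepsilon\cap C)}\,\|g_{w_\nu,h}(|\cdot|_{K_\varepsilon})\|_{L_{p'}(hK_\varepsilon\cap C)},
\]
and the second factor tends to $0$ precisely by \eqref{defDiffIsSmall1}. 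Your argument for \eqref{thirdSummand} is closer to correct, but the stated reason (``$w(|\cdot|_K)\in L_1(h\KC)$'') is again false for the Riesz weight; what actually saves that integral is that $h(K\setminus K_\varepsilon)$ stays bounded away from $\theta$, so $w\in\weightedLp_{p'}(u,h)$ for $u>0$ suffices. The paper handles \eqref{thirdSummand} by the same gradient-plus-H\"older scheme, showing that $\int_{ht(K\setminus K_\varepsilon)\cap C}|\nabla f|_{\polarK}^p\to 0$ uniformly in $t$ while the $L_{p'}$ factor is merely finite.
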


\subsection{Proof of Theorem~\ref{th::mainInequality}}
\begin{proof}
We prove the statement of the theorem in the case, when $K$ is a polytope first. Using Lemma~\ref{l::LagrangeTh}, Lemma~\ref{l::changeOrderTwice} and the H\"{o}lder inequality, we obtain
\begin{multline}\label{firstIneq}
\left|\int\limits_{\,h\KC}w(|y|_K)[f(y) - f(\theta)]
dy\right|
=
\left|\int\limits_{\,h\KC}w(|y|_K)\int\limits_0^1(y,\nabla f(ty))dtdy\right|
\\  \leq 
\int\limits_{h\KC}\int\limits_0^1w(|y|_K)|y|_{K}|\nabla f(ty)|_{\polarK}dtdy = \int\limits_{h\KC}|\nabla f(y)|_{\polarK} g_{w,h}(|y|_K)  dy
\\\leq 
\|\,|\nabla f|_{\polarK}\,\|_{L_p(h\KC)} \left\|g_{w,h}(|\cdot|_K) \right\|_{L_{p'}(h\KC)}
\end{multline}
and inequality~\eqref{mainInequality} follows from Lemma~\ref{l::radialFunctionIntegral}.

The fact that the function $f_e$ defined by equality~\eqref{extremalFunction} belongs to the class $W^{1,p}(h\KC)$, follows from equality $(p'-1)p =p'$ and Lemmas~\ref{l::radialFunctionIntegral} and~\ref{l::extremalFunction}. 

In order to prove that the function $f_e$ turns inequality~\eqref{mainInequality} into equality, we examine the inequalities that were used during the proof. First of all note that inside each set ${\rm int\,}\Gamma$ that partition the set $h\KC$ (see Lemma~\ref{l::curvelinearCoords}), due to Lemma~\ref{l::extremalFunction}, we obtain
$$
(y,\nabla f_e(ty)) 
=
(y,n) \frac{g_{w,h}^{p'-1}(|ty|_K)}{\delta} = |y|_K\cdot |\nabla f_e(ty)|_{\polarK},
$$
and the first inequality in~\eqref{firstIneq} becomes equality for $f_e$. Next, since $p(p'-1) = p'$, using Lemma~\ref{l::extremalFunction} we obtain
$
|\nabla f_e(y)|_{\polarK}^p = g_{w,h}^{p'}(|y|_K),
$
and hence the second inequality in~\eqref{firstIneq} also becomes equality for $f_e$. Hence $f_e$ turns inequality~\eqref{mainInequality} into equality. The theorem is proved in the case, when $K$ is a polytope. 

Let now $w\in \weightsClass_{p'}^*(0,h)$ and $K$ be an arbitrary set from $\setsClass$. For $\varepsilon> 0$ choose a polytope $K_\varepsilon\in\setsClass$, $K\supset K_\varepsilon$ such that inequality~\eqref{K_epsilon} holds. For arbitrary $f\in W^{1,p}(h\KC)$
\begin{multline}\label{arbitraryK}
\left|\int\limits_{\,h\KC}w(|y|_K)[f(y) - f(\theta)] dy\right| 
=
\left|\int\limits_{\,hK_\varepsilon\cap C}w(|y|_{K_\varepsilon})[f(y) - f(\theta)]
dy\right.
\\ 
+
\left.\int\limits_{hK_\varepsilon\cap C}[w(|y|_K) - w(|y|_{K_\varepsilon})][f(y) - f(\theta)]dy
+
\int\limits_{h(K\setminus K_\varepsilon)\cap C}w(|y|_{K})[f(y) - f(\theta)]dy\right|.
\end{multline}
Since $K_\varepsilon\subset K$, one has $\polarK_\varepsilon\supset \polarK$, and hence
\begin{equation}\label{polarNormsComparison}
    |y|_{\polarK_\varepsilon}\leq |y|_{\polarK},\, y\in \Rd.
\end{equation}
Using inequalities~\eqref{polarNormsComparison} and~\eqref{mainInequality}, we obtain as $\varepsilon\to 0$
\begin{multline}\label{firstSummand}
\left|\int\limits_{\,hK_\varepsilon\cap C}w(|y|_{K_\varepsilon})[f(y) - f(\theta)]
dy\right|
\\\leq 
\left(d\cdot\mes (K_\varepsilon\cap C)\right)^{\frac 1{p'}} \left\|g_{w,h} \right\|_{\weightedLp_{p'}(0,h)} 
\||\nabla f|_{\polarK_\varepsilon}\|_{L_p(hK_\varepsilon\cap C)}
\\  \leq 
\left(d\cdot\mes (K_\varepsilon\cap C)\right)^{\frac 1{p'}} \left\|g_{w,h} \right\|_{\weightedLp_{p'}(0,h)} 
\||\nabla f|_{\polarK}\|_{L_p(hK_\varepsilon\cap C)}
\\ = 
(1+o(1))\left(d\cdot\mes(\KC)\right)^{\frac 1{p'}} \left\|g_{w,h} \right\|_{\weightedLp_{p'}(0,h)} 
\||\nabla f|_{\polarK}\|_{L_p(h\KC)}.
\end{multline}

The second and third summands on the right-hand side of~\eqref{arbitraryK} tend to zero as $\varepsilon\to 0$ due to Lemma~\ref{l::secondSummand}.
Thus we obtain that inequality~\eqref{mainInequality} holds for arbitrary $f\in W^{1,p}(h\KC)$.
In order to see that inequality~\eqref{mainInequality} is sharp, it is enough to notice that in~\eqref{firstSummand} for $f = f_e$ we actually have
\begin{multline*}
\left|\int\limits_{\,hK_\varepsilon\cap C}w(|y|_{K_\varepsilon})[f(y) - f(\theta)] dy\right|
\\=
(1+o(1))\left(d\cdot\mes(\KC)\right)^{\frac 1{p'}} \left\|g_{w,h} \right\|_{\weightedLp_{p'}(0,h)} 
\||\nabla f|_{\polarK}\|_{L_p(h\KC)},
\end{multline*}
and hence the constant on the right-hand side of~\eqref{mainInequality} can't be made smaller.
\end{proof}
\section{On Landau--Kolmogorov type inequalities}\label{sec::kolmogorov}
\subsection{Auxiliary results} 

We need the following lemmas. Their proofs are given in Appendices~\ref{app::intw(|t|)EstimateProof} and~\ref{app::fourthSummandProof}.
\begin{lemma}\label{l::intw(|t|)Estimate}
Let $0<h<H$, $w\in \weightedLp_1(h,H)$, sets $K_\varepsilon\in \setsClass$, $\varepsilon\in [0,1]$, be such that $K_{\varepsilon_1}\subset K_{\varepsilon_2}$ provided $\varepsilon_1<\varepsilon_2$, and $A\subset \Rd$ be a measurable set such that $|t|_{K_\varepsilon}\in [h,H]$ for all $t\in A$ and $\varepsilon\in [0,1]$. Then for each $\gamma>0$ there exists $\nu>0$ such that for all $\varepsilon\in [0,1]$ and $B\subset A$ with $\mes  B<\nu$ one has
$$
\int_B w(|t|_{K_\varepsilon})dt < \gamma.
$$
\end{lemma}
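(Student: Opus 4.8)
The plan is to establish a version of the absolute continuity of the integral that is uniform in $\varepsilon$, the uniformity coming from the nestedness of the family $\{K_\varepsilon\}$ together with a radial change of variables.

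First I would isolate an $\varepsilon$-uniform comparison estimate: for every non-negative measurable $g\colon(h,H)\to\RR_+$ and every measurable set $B\subset A$,
\[
\int_B g(|t|_{K_\varepsilon})\,dt\le d\cdot\mes(K_1)\int_h^H g(\rho)\,\rho^{d-1}\,d\rho,
\]
with the constant independent of $\varepsilon$ and of $B$. Indeed, $K_\varepsilon\subset K_1$ gives $\mes(K_\varepsilon)\le\mes(K_1)<\infty$; the push-forward of Lebesgue measure under $t\mapsto|t|_{K_\varepsilon}$ has distribution function $r\mapsto\mes(rK_\varepsilon)=r^d\cdot\mes(K_\varepsilon)$, so (as in the computation proving Lemma~\ref{l::radialFunctionIntegral}, which does not use that $K_\varepsilon$ is a polytope) $\int_{\{h\le|t|_{K_\varepsilon}\le H\}}g(|t|_{K_\varepsilon})\,dt=d\cdot\mes(K_\varepsilon)\int_h^H g(\rho)\rho^{d-1}\,d\rho$; finally $B\subset A\subset\{t\colon h\le|t|_{K_\varepsilon}\le H\}$ by the hypothesis on $A$, while $g\ge0$.

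Next, given $\gamma>0$, I would truncate the weight: put $w_M=\min\{w,M\}$. Since $w\in\weightedLp_1(h,H)$, that is $\int_h^H t^{d-1}w(t)\,dt<\infty$, monotone convergence yields $\int_h^H t^{d-1}(w(t)-w_M(t))\,dt\to0$ as $M\to\infty$; fix $M$ with $d\cdot\mes(K_1)\int_h^H t^{d-1}(w-w_M)\,dt<\gamma/2$ and set $\nu=\gamma/(2M)$. Then for any $\varepsilon\in[0,1]$ and any measurable $B\subset A$ with $\mes B<\nu$, writing $w=w_M+(w-w_M)$, the first part contributes at most $M\cdot\mes B<M\nu=\gamma/2$, while the second part, by the $\varepsilon$-uniform estimate applied with $g=w-w_M\ge0$, contributes at most $d\cdot\mes(K_1)\int_h^H t^{d-1}(w-w_M)\,dt<\gamma/2$; adding these two bounds gives $\int_B w(|t|_{K_\varepsilon})\,dt<\gamma$.

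The only step that is not routine is the first one, namely bounding $\int_B w(|t|_{K_\varepsilon})\,dt$ by a fixed multiple of $\int_h^H t^{d-1}w(t)\,dt$ with a constant that does not depend on $\varepsilon$. This is precisely where both hypotheses on $\{K_\varepsilon\}$ enter: nestedness furnishes the uniform volume bound $\mes(K_\varepsilon)\le\mes(K_1)$, and the assumption that $|t|_{K_\varepsilon}\in[h,H]$ for $t\in A$ confines the integration to the annular region on which the radial formula turns it into the one-dimensional weighted integral of $w$. Once this comparison is available, the conclusion follows from the classical truncation argument for uniform absolute continuity of the integral.
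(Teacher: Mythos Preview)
Your argument is correct. The uniform comparison
\[
\int_B g(|t|_{K_\varepsilon})\,dt\le d\cdot\mes(K_1)\int_h^H g(\rho)\,\rho^{d-1}\,d\rho
\]
follows exactly as you say from the push-forward of Lebesgue measure under $t\mapsto|t|_{K_\varepsilon}$ (note that the paper's proof of Lemma~\ref{l::radialFunctionIntegral} does in fact rely on the polytope structure, but your distribution-function justification bypasses this and works for any $K_\varepsilon\in\setsClass$). The truncation $w=w_M+(w-w_M)$ then yields the uniform absolute continuity immediately.

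Your route is genuinely different from, and more elementary than, the paper's. The paper first invokes Rademacher's theorem and the coarea formula to bound $\int_A w(|t|_{K_\varepsilon})\,dt$ via the surface measure $\mes\partial K_\varepsilon$, and then, rather than truncating $w$, reduces the problem to the single fixed function $w(|\cdot|_{K_0})$ by a ray-wise dilation: along each ray the ratio $|t|_{K_\varepsilon}/|t|_{K_0}$ is a constant $\alpha(\rho,\varepsilon)\in[\beta,1]$, and rescaling by this factor maps $B$ to a set $B_\varepsilon$ of no larger measure while turning $w(|t|_{K_\varepsilon})$ into $w(|t|_{K_0})$, at the cost of a factor $\beta^{-d}$. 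Absolute continuity of the one integral $\int w(|t|_{K_0})\,dt$ then gives the conclusion. Your approach trades this geometric change of variables for the analytic truncation of $w$, which is both shorter and avoids the coarea/Rademacher machinery; the paper's approach, on the other hand, makes the role of the nestedness more explicit through the uniform ratio bound $\beta$, and in principle transfers other properties of $w(|\cdot|_{K_0})$ to the whole family, not just equi-integrability.
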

\begin{lemma}\label{l::fourthSummand}
Let $K$ be an arbitrary set from $\setsClass$. For each $\varepsilon \in (0,1]$ choose a polytope $K_{\varepsilon}\in \setsClass$, such that $K_{\varepsilon}\supset K$ and inequality~\eqref{K_epsilon} holds. If $K_{\varepsilon_1}\subset  K_{\varepsilon_2}$ provided $\varepsilon_1 < \varepsilon_2$,
 $h>0$ and $w\in \weightedLp_1(h,+\infty)$, then 
$$
\int\limits_{\Rd\setminus hK_\varepsilon}|w(|t|_K)-
w(|t|_{K_\varepsilon})|dt 
=
o(1)\text{ as } \varepsilon\to 0.
$$
\end{lemma}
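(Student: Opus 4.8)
\textbf{Proof proposal for Lemma~\ref{l::fourthSummand}.}

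\emph{Approach.} The plan is to decompose the integral over $\Rd\setminus hK_\varepsilon$ into the region where $|t|_K$ and $|t|_{K_\varepsilon}$ are both comparable (say both lie in a bounded annulus near $h$), and the region far from the origin where both norms are large and the weight decay controls everything. The key quantitative fact is that the Hausdorff-type closeness~\eqref{K_epsilon} forces the norms $|\cdot|_K$ and $|\cdot|_{K_\varepsilon}$ to be uniformly close as functions on $\Rd$; more precisely, since $K_\varepsilon\supset K$ and both contain a fixed ball $rB_2$ and are contained in a fixed ball $RB_2$, inequality~\eqref{K_epsilon} yields an estimate of the form $0\le |t|_K - |t|_{K_\varepsilon}\le c\varepsilon\,|t|_{K_\varepsilon}$ (equivalently $|t|_{K_\varepsilon}\le |t|_K\le(1+c\varepsilon)|t|_{K_\varepsilon}$) with a constant $c$ depending only on $r,R$. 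This is the monotone-nesting analogue of relation~\eqref{polarNormsComparison} used in the proof of Theorem~\ref{th::mainInequality}.

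\emph{Main steps.} First I would establish the pointwise comparison $|t|_{K_\varepsilon}\le|t|_K\le(1+c\varepsilon)|t|_{K_\varepsilon}$ from~\eqref{K_epsilon} and the inclusions between fixed balls. Next, fix $H>h$ and split $\Rd\setminus hK_\varepsilon = \big((\Rd\setminus hK_\varepsilon)\cap HK_\varepsilon\big)\cup(\Rd\setminus HK_\varepsilon)$. On the bounded piece $(\Rd\setminus hK_\varepsilon)\cap HK_\varepsilon$, all relevant points $t$ satisfy $|t|_{K_\varepsilon}\in[h,H]$ and (for $\varepsilon$ small) $|t|_K\in[h,H(1+c\varepsilon)]$, so I can apply Lemma~\ref{l::intw(|t|)Estimate} with the family $K_\varepsilon$, $\varepsilon\in[0,1]$, after shrinking the set slightly: the symmetric-difference region where exactly one of $|t|_K,|t|_{K_\varepsilon}$ exceeds $h$ (or $H$) has measure $O(\varepsilon)\to 0$, so by Lemma~\ref{l::intw(|t|)Estimate} the integral of $w(|t|_{K_\varepsilon})$ (and of $w(|t|_K)$) over that region is $o(1)$. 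On the remaining part where both norms are in $[h,H]$, I use continuity of the weight composed with the norm: $w(|t|_K)-w(|t|_{K_\varepsilon})\to 0$ pointwise a.e.\ as $\varepsilon\to 0$ (since $|t|_K-|t|_{K_\varepsilon}\to 0$ and $w$ is, for a.e.\ value, continuous — this uses $w\in\weightedLp_1(h,H)$, hence finite a.e.), and I dominate by an integrable majorant built again via Lemma~\ref{l::intw(|t|)Estimate}-type equi-integrability to pass to the limit (dominated/Vitali convergence). For the unbounded tail $\Rd\setminus HK_\varepsilon$, I bound $\int_{\Rd\setminus HK_\varepsilon}|w(|t|_K)-w(|t|_{K_\varepsilon})|\,dt \le \int_{\Rd\setminus HK_\varepsilon}w(|t|_K)\,dt + \int_{\Rd\setminus HK_\varepsilon}w(|t|_{K_\varepsilon})\,dt$, and estimate each using the pointwise comparison together with Lemma~\ref{l::radialFunctionIntegral} applied to polytopes sandwiching $K$ and $K_\varepsilon$: each is comparable to $d\cdot\mes(\cdot)\int_H^\infty w(\rho)\rho^{d-1}\,d\rho$, which is finite (as $w\in\weightedLp_1(h,\infty)$) and can be made as small as we wish by choosing $H$ large, uniformly in $\varepsilon\in[0,1]$ since the measures of the sandwiching polytopes are uniformly bounded.

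\emph{Conclusion.} Combining: given $\gamma>0$, first choose $H$ so large that the tail contributes less than $\gamma/2$ for all $\varepsilon\in[0,1]$, then let $\varepsilon\to 0$ so that the bounded-region contributions (symmetric-difference piece plus the continuity piece) drop below $\gamma/2$; this gives $\limsup_{\varepsilon\to 0}\int_{\Rd\setminus hK_\varepsilon}|w(|t|_K)-w(|t|_{K_\varepsilon})|\,dt\le\gamma$, and since $\gamma$ is arbitrary the integral is $o(1)$.

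\emph{Main obstacle.} The delicate point is handling the "boundary layer" near the radius $h$ (and near $H$): the sets $\{|t|_K>h\}$ and $\{|t|_{K_\varepsilon}>h\}$ differ by a thin shell whose measure is $O(\varepsilon)$ but on which the weight $w$ may be large (it is only $\weightedLp_1$, not bounded, near $h$). This is precisely what Lemma~\ref{l::intw(|t|)Estimate} is designed to overcome — equi-absolute-continuity of the integrals $\int_B w(|t|_{K_\varepsilon})\,dt$ uniformly in $\varepsilon$ — so the real work is verifying its hypotheses carefully: that the shell lies in a common annulus $\{h\le|t|_{K_\varepsilon}\le H\}$ for all $\varepsilon$ (perhaps after replacing $h,H$ by slightly smaller/larger fixed values) and that its measure indeed tends to $0$. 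The tail and the interior-continuity parts are then routine.
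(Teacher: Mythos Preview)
Your overall architecture matches the paper's: split $\Rd\setminus hK_\varepsilon$ into a bounded annulus $HK_\varepsilon\setminus hK_\varepsilon$ and a tail $\Rd\setminus HK_\varepsilon$; kill the tail uniformly in $\varepsilon$ using $w\in\weightedLp_1(h,\infty)$ and the coarea-type bound~\eqref{intW|t|Estimate}; and on the bounded piece exploit the equi-integrability furnished by Lemma~\ref{l::intw(|t|)Estimate}. The tail argument and the use of Lemma~\ref{l::intw(|t|)Estimate} are exactly as in the paper.

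There is, however, a genuine gap in your treatment of the bounded piece. You assert that $w(|t|_K)-w(|t|_{K_\varepsilon})\to 0$ pointwise a.e.\ because ``$w$ is, for a.e.\ value, continuous.'' This is false for a general $w\in\weightedLp_1(h,H)$: an integrable (even bounded measurable) function need not have \emph{any} point of continuity---take $w=\chi_E$ for a measurable $E\subset[h,H]$ with both $E$ and $[h,H]\setminus E$ dense and of positive measure in every subinterval. For such $w$ the difference $w(|t|_K)-w(|t|_{K_\varepsilon})$ does not converge pointwise, so neither dominated nor Vitali convergence applies as you have stated it. Vitali would work if you established convergence \emph{in measure}, but you have not.

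The paper closes exactly this gap with Luzin's theorem: approximate $w$ on the relevant compact interval by a continuous $\overline{w}$ that differs from $w$ on a set $E$ of arbitrarily small one-dimensional measure; the preimages $E_\varepsilon=\{t:|t|_{K_\varepsilon}\in E\}$ then have small $d$-dimensional measure uniformly in $\varepsilon$ (via the coarea bound and monotonicity of $\mes\partial K_\varepsilon$), so Lemma~\ref{l::intw(|t|)Estimate} controls $\int_{E_0\cup E_\varepsilon}w(|t|_K)\,dt$ and $\int_{E_0\cup E_\varepsilon}w(|t|_{K_\varepsilon})\,dt$; off $E_0\cup E_\varepsilon$ one has $w=\overline{w}$ at both arguments, and uniform continuity of $\overline{w}$ finishes. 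Note that this Luzin step is precisely what would give you convergence in measure, so your Vitali route can be repaired---but the repair \emph{is} the paper's argument.

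A minor remark: the ``boundary layer near $h$'' you flag as the main obstacle is actually empty here. Since $K_\varepsilon\supset K$, one has $|t|_K\ge|t|_{K_\varepsilon}\ge h$ for every $t\in\Rd\setminus hK_\varepsilon$, so both radii stay in $[h,\infty)$ automatically. The real obstacle is the lack of continuity of $w$, which you underestimated.
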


The following statement holds.
\begin{lemma}\label{l::norm-of-truncated}
Let $C\in \conesClass$, $h>0$, $K\in \setsClass$ and $w\in \weightedLp_1(h,+\infty)$. Then the operator
$$
D_{K, w, h}\colon L_\infty(C)\to  L_\infty(C)
$$ defined by~\eqref{truncatedOperator} is bounded and 
$$
\|D_{K, w, h}\|=2d\cdot \mes(\KC)\int\limits_h^\infty \omega (\rho)\rho^{d-1}\, d\rho.
$$
If $f\in L_\infty(C)$ is such that $D_{K, w, h}f$ is  continuous at $\theta$ and such that $f(\theta) = \|f\|_{L_\infty(C)}$ and $f(x) = -\|f\|_{L_\infty(C)}$ for all $x\in C\setminus hK$, then 
\begin{equation}\label{trunkatedOperatorNorm}
    \|D_{K, w, h}f\|_{L_\infty(C)} = D_{K, w, h}f(\theta).
\end{equation}
\end{lemma}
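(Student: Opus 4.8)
The plan is to establish the three assertions in turn: the upper estimate $\|D_{K,w,h}\|\le 2d\cdot\mes(\KC)\int_h^\infty w(\rho)\rho^{d-1}\,d\rho$, then the identity~\eqref{trunkatedOperatorNorm}, and finally the matching lower estimate. I would begin from the radial integration identity
\[
\int\limits_{C\setminus hK}w(|t|_K)\,dt=d\cdot\mes(\KC)\int\limits_h^\infty w(\rho)\rho^{d-1}\,d\rho,
\]
valid for an arbitrary $K\in\setsClass$; it is the analogue of Lemma~\ref{l::radialFunctionIntegral} for the exterior region $\{t\in C\colon|t|_K>h\}$, and it follows from the homogeneity relation $\mes(\{t\in C\colon|t|_K<\rho\})=\mes(\rho(\KC))=\rho^{d}\mes(\KC)$ together with a layer-cake (equivalently, push-forward of Lebesgue measure on $C$ under $t\mapsto|t|_K$) argument; finiteness of the right-hand side is exactly the hypothesis $w\in\weightedLp_1(h,\infty)$. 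Since $C$ is a convex cone, $x,t\in C$ implies $x+t\in C$, so $f(x+t)$ is defined for all $x\in C$, $t\in C\setminus hK$, and the crude bound $|f(x)-f(x+t)|\le 2\|f\|_{L_\infty(C)}$ together with the above identity gives $|(D_{K,w,h}f)(x)|\le 2d\cdot\mes(\KC)\big(\int_h^\infty w(\rho)\rho^{d-1}\,d\rho\big)\|f\|_{L_\infty(C)}$ for every $x\in C$; in particular $w(|\cdot|_K)\in L_1(C\setminus hK)$ and $\|D_{K,w,h}\|$ is at most the claimed quantity.

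For~\eqref{trunkatedOperatorNorm} I would take $f$ as in the last assertion, fix the representative with $f\equiv-\|f\|_{L_\infty(C)}$ on $C\setminus hK$ and $f(x)\to\|f\|_{L_\infty(C)}$ as $x\to\theta$ in $C$, and pass to the limit $x\to\theta$ in
\[
(D_{K,w,h}f)(x)=f(x)\int\limits_{C\setminus hK}w(|t|_K)\,dt-\int\limits_{C\setminus hK}w(|t|_K)f(x+t)\,dt .
\]
The first summand tends to $\|f\|_{L_\infty(C)}\int_{C\setminus hK}w(|t|_K)\,dt$. In the second, the triangle inequality $|x+t|_K\ge|t|_K-|x|_K$ shows $f(x+t)=-\|f\|_{L_\infty(C)}$ whenever $|t|_K\ge h+|x|_K$, while the remaining $t$ lie in $\{t\in C\colon h\le|t|_K\le h+|x|_K\}$, whose Lebesgue measure $\big((h+|x|_K)^d-h^d\big)\mes(\KC)$ tends to $0$, so the integral of $w(|t|_K)$ over it tends to $0$ by absolute continuity of the integral of $w(|\cdot|_K)\in L_1(C\setminus hK)$. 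Hence the second summand also tends to $\|f\|_{L_\infty(C)}\int_{C\setminus hK}w(|t|_K)\,dt$, and since $D_{K,w,h}f$ is continuous at $\theta$,
\[
(D_{K,w,h}f)(\theta)=2\|f\|_{L_\infty(C)}\int\limits_{C\setminus hK}w(|t|_K)\,dt=2d\cdot\mes(\KC)\Big(\int\limits_h^\infty w(\rho)\rho^{d-1}\,d\rho\Big)\|f\|_{L_\infty(C)} .
\]

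Next, using again continuity of $D_{K,w,h}f$ at $\theta$ and the a.e.\ bound $|(D_{K,w,h}f)(x)|\le\|D_{K,w,h}f\|_{L_\infty(C)}$, one gets $(D_{K,w,h}f)(\theta)\le\|D_{K,w,h}f\|_{L_\infty(C)}\le\|D_{K,w,h}\|\cdot\|f\|_{L_\infty(C)}$, which by the first paragraph is at most $2d\cdot\mes(\KC)\big(\int_h^\infty w(\rho)\rho^{d-1}\,d\rho\big)\|f\|_{L_\infty(C)}$; comparing with the displayed value of $(D_{K,w,h}f)(\theta)$ forces all these quantities to coincide, which is~\eqref{trunkatedOperatorNorm}. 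To obtain the lower bound for $\|D_{K,w,h}\|$ I would apply this to the concrete admissible function $f(x)=\max\{-1,\,1-2|x|_K/h\}$: it satisfies $\|f\|_{L_\infty(C)}=1$, equals $-1$ on $C\setminus hK$, is continuous at $\theta$ with value $1$, and $D_{K,w,h}f$ is continuous at $\theta$ by the same limit computation as above; hence $\|D_{K,w,h}f\|_{L_\infty(C)}=2d\cdot\mes(\KC)\int_h^\infty w(\rho)\rho^{d-1}\,d\rho$, so the upper estimate is attained and $\|D_{K,w,h}\|=2d\cdot\mes(\KC)\int_h^\infty w(\rho)\rho^{d-1}\,d\rho$.

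The step I expect to require the most care is the radial integration identity in the first paragraph: Lemma~\ref{l::radialFunctionIntegral} is stated only for polytopes and only for the bounded region $h\KC$, so its extension to an arbitrary $K\in\setsClass$ and to the unbounded region $\{t\in C\colon|t|_K>h\}$ has to be carried out through the homogeneity of the gauge $|\cdot|_K$ and a monotone-convergence/layer-cake argument. A secondary delicate point is the interchange of the limit $x\to\theta$ with the integral in the computation of $(D_{K,w,h}f)(\theta)$, which rests on the triangle inequality for $|\cdot|_K$ and on absolute continuity of $\int w(|\cdot|_K)$.
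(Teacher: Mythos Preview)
Your proof is correct and takes a genuinely different, more direct route than the paper for the central step. The paper establishes the radial integration identity $\int_{C\setminus hK}w(|t|_K)\,dt=d\cdot\mes(\KC)\int_h^\infty w(\rho)\rho^{d-1}\,d\rho$ first for polytopes $K$ by decomposing $\KC$ into simplices and applying Lemma~\ref{l::curvelinearCoords}, and then extends to an arbitrary $K\in\setsClass$ by approximating $K$ from outside by polytopes $K_\varepsilon\supset K$, splitting $D_{K,w,h}f(x)$ into three pieces, and controlling the error terms via the technical Lemmas~\ref{l::intw(|t|)Estimate} and~\ref{l::fourthSummand}. Your layer-cake argument based on $\mes\{t\in C:|t|_K<\rho\}=\rho^d\mes(\KC)$ handles all $K\in\setsClass$ at once and bypasses this approximation machinery entirely; it is shorter and more elementary. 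The paper's simplex decomposition pays dividends elsewhere (e.g.\ in Lemma~\ref{l::extremalFunction}, where the explicit gradient of the extremal function on each simplex is essential), but for the present lemma your approach is cleaner.

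Two minor remarks. Your derivation of~\eqref{trunkatedOperatorNorm} via the limit $x\to\theta$ tacitly assumes $f(x)\to\|f\|_{L_\infty(C)}$ as $x\to\theta$, which is not part of the hypothesis; the paper simply substitutes $x=\theta$ into the defining integral (the hypotheses supply $f(\theta)$ and $f(t)$ for $t\in C\setminus hK$ pointwise), obtaining $(D_{K,w,h}f)(\theta)=2\|f\|_{L_\infty(C)}\int_{C\setminus hK}w(|t|_K)\,dt$ by ``direct calculations'', and then invokes continuity of $D_{K,w,h}f$ at $\theta$ only to conclude $\|D_{K,w,h}f\|_{L_\infty(C)}\ge (D_{K,w,h}f)(\theta)$. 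Second, your concrete choice $f(x)=\max\{-1,\,1-2|x|_K/h\}$ for the lower bound is a nice self-contained touch; the paper instead points to the functions~\eqref{extremalPsi}, whose continuity property (Lemma~\ref{l::derivativesContinuity}) requires the extra assumption $w\in\weightsClass_{p'}(0,h)$, so your example is actually better suited to the hypotheses of this lemma.
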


\begin{proof}
We prove the statement of the lemma in the case when $K$ is a polytope first. For $f\in L_\infty (C)$  and $x\in C$
\begin{multline}\label{5.1}
 |D_{K, w, h}f(x)|=\left |\,\int\limits_{C\setminus hK}w(|t|_K)(f(x)-f(x+t))\, dt\right |
\\\le
2\|f\|_{L_\infty(C)}\int\limits_{C\setminus hK}w(|t|_K)\, dt.
\end{multline}
We split the set $h\KC$ into simplices $h\Gamma_i$, $i=1,\ldots, n$, and by $C_i$ denote the cone that corresponds to $h\Gamma_i$. As in the proof of Lemma~\ref{l::radialFunctionIntegral}, we obtain
\begin{gather*}
\int\limits_{C\setminus hK}w(|t|_K)\, dt
=
\sum\limits_{i=1}^{n}\int\limits_{C_i\setminus h\Gamma_i}w(|t|_K)\, dt
=
\sum\limits_{i=1}^{n}\delta_i\int\limits_h^\infty \rho^{d-1}\int\limits_{\gamma_i}w(|y\rho|_K)\, dy
\\
=
\sum\limits_{i=1}^{n} \delta_i\cdot\mes \Gamma_i\int\limits_h^\infty \rho^{d-1}w(\rho)\, d\rho
=
 d \cdot\mes(\KC)\int\limits_h^\infty \rho^{d-1}w(\rho)\, d\rho.
\end{gather*}
Using \eqref{5.1}, we obtain the inequality
\begin{gather}\label{5.2}
\|D_{K, w, h}f\|_{L_\infty(C)}\le 2\|f\|_{L_\infty(C)}d\cdot  \mes(\KC)\int\limits_h^\infty \rho^{d-1}w(\rho)\, d\rho.
\end{gather}
An example of a function $f$ that satisfies the described in the statement of the lemma conditions can easily be constructed (the functions defined by~\eqref{extremalPsi} are of this kind under additional assumption regarding $w$, see Lemma~\ref{l::derivativesContinuity} below). Equality~\eqref{trunkatedOperatorNorm} can be obtained by direct calculations.

 Let now $K$ be an arbitrary set from $\setsClass$. Set $K_0 = K$ and choose a family of polytopes $K_{\varepsilon}\in \setsClass$,   $\varepsilon \in (0,1]$, that satisfy the conditions of  Lemma~\ref{l::fourthSummand}. 
 For all $f\in L_\infty (C)$ and $x\in C$,
 \begin{multline}\label{5.2.1}
 |D_{K,w, h}f(x)|
 =
 \left |\,\int\limits_{C\setminus hK}w(|t|_K)(f(x)-f(x+t)) dt\right |
 \\=
 \left|\,\int\limits_{C\setminus hK_\varepsilon}w(|t|_{K_\varepsilon})(f(x)-f(x+t))\, dt
 +
 \int\limits_{h(K_\varepsilon\setminus K)\cap C}w(|t|_K)(f(x)-f(x+t))\, dt\right.
\\
+\left.\int\limits_{C\setminus hK_\varepsilon}(w(|t|_K)-
w(|t|_{K_\varepsilon}))(f(x)-f(x+t))\, dt
\right|.
\end{multline}
Next we estimate each of the summands on the right-hand side of equality~\eqref{5.2.1}.

By inequality \eqref{5.2} as $\varepsilon \to 0$
\begin{multline*}
\left |\,\int\limits_{C\setminus hK_\varepsilon}w(|t|_{K_\varepsilon})(f(x)-f(x+t))\, dt\right|
\le 
2\|f\|_{L_\infty(C)}d\cdot \mes (K_\varepsilon\cap C)\int\limits_h^\infty \rho^{d-1}w(\rho)\, d\rho\\
= (1+o(1))2\|f\|_{L_\infty(C)}d\cdot \mes(\KC)\int\limits_h^\infty \rho^{d-1}w(\rho)\, d\rho.
\end{multline*}
Moreover, the  inequality in the latter formula becomes equality, if $f=f_\varepsilon$ is an extremal for the polytope $K_\varepsilon$ function.
Using Lemma~\ref{l::intw(|t|)Estimate} we obtain as $\varepsilon\to 0$
$$
\left|\,\int\limits_{h(K_\varepsilon\setminus K)\cap C}w(|t|_K)(f(x)-f(x+t)) dt\right|
\leq 
2\|f\|_{L_\infty(C)}\int\limits_{h(K_\varepsilon\setminus K)}w(|t|_K) dt
=
o(1).
$$
Finally, using Lemma~\ref{l::fourthSummand}, we obtain 
\begin{multline*}
\left|\,\int\limits_{C\setminus hK_\varepsilon}(w(|t|_K)-
w(|t|_{K_\varepsilon}))(f(x)-f(x+t))dt\right|
\\ \leq
2\|f\|_{L_\infty(C)}\int\limits_{\Rd\setminus hK_\varepsilon}|w(|t|_K)-
w(|t|_{K_\varepsilon})|dt 
=
o(1) \text {  as } \varepsilon\to 0.
\end{multline*}

\end{proof}
\begin{remark}\label{r::embeddedPolytopes}
During the proof of the estimate from below for arbitrary $K\in \setsClass$, we approximated $K$ by a family of polytopes $K_\varepsilon$ that contain $K$. If $w\in \weightedLp_1(h-\nu,\infty)$ for some $\nu>0$, then the same arguments can be applied to a family of polytopes $K_\epsilon\subset K$ that approximate $K$.
\end{remark}

\begin{lemma}\label{l::derivativesContinuity}
Let $C\in\conesClass$, $p\in (d,\infty]$, $K\in \setsClass$ be a polytope,  $h>0$ and $w\in \weightsClass_{p'}(0,h)\cap \weightedLp_1(h,\infty)$. Then 
$D_{K,w}\psi$ and $D_{K,w,h}\psi$ are continuous at $\theta$, where $\psi$ is defined in~\eqref{extremalPsi}.
\end{lemma}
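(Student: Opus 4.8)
The plan is to exploit the explicit form of $\psi=\psi_{K,w,h}$ from~\eqref{extremalPsi}. Put $F(r):=\int_0^{r}g_{w,h}^{p'-1}(u)\,du$ for $r\in[0,h]$, so that on $h\KC$ one has $\psi=\psi(\theta)-f_{e,h}$ with $f_{e,h}$ the function~\eqref{extremalFunction} associated with $g_{w,h}^{p'-1}$ and $\psi(\theta)=\tfrac12 F(h)$, while $\psi\equiv-\tfrac12 F(h)$ on $C\setminus hK$. Since $g_{w,h}(u)=u^{1-d}\int_u^{h}w(t)t^{d-1}\,dt$ is a product of two non-negative non-increasing functions of $u$, the functions $g_{w,h}$ and $g_{w,h}^{p'-1}$ are non-increasing on $(0,h)$, hence $F$ is non-decreasing and concave on $[0,h]$ with $F(0)=0$. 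First I would record that $\psi\in L_{\infty,p}^1(C)$: $\psi$ is bounded, and since $\psi$ is continuous across $\{|t|_K=h\}$, Lemma~\ref{l::extremalFunction} gives $|\nabla\psi|_{\polarK}=g_{w,h}^{p'-1}(|\cdot|_K)$ on $h\KC$ and $|\nabla\psi|_{\polarK}=0$ on $C\setminus hK$, which lies in $L_p(C)$ by Lemma~\ref{l::radialFunctionIntegral} because $(p'-1)p=p'$. In particular both $D_{K,w}\psi$ and $D_{K,w,h}\psi$ are defined, and, applying Lemma~\ref{l::changeOrderTwice} to $f_{e,h}$ (with the substitution $u=t|y|_K$), Lemma~\ref{l::curvelinearCoords} and Lemma~\ref{l::radialFunctionIntegral},
$$d\cdot\mes(\KC)\int_0^{h}w(\tau)\tau^{d-1}F(\tau)\,d\tau=\int_{h\KC}w(|y|_K)F(|y|_K)\,dy=\int_{h\KC}g_{w,h}^{p'}(|y|_K)\,dy<\infty.$$

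For the truncated operator, for $x\in C$ write
$$(D_{K,w,h}\psi)(x)=\psi(x)\int_{C\setminus hK}w(|t|_K)\,dt-\int_{C\setminus hK}w(|t|_K)\,\psi(x+t)\,dt.$$
The first integral is a finite constant by Lemma~\ref{l::norm-of-truncated} and $\psi$ is continuous, so the first term is continuous at $\theta$. In the second term $w(|\cdot|_K)\in L_1(C\setminus hK)$ since $w\in\weightedLp_1(h,\infty)$, the integrand is dominated by $\|\psi\|_{L_\infty(C)}w(|t|_K)$, and, $C$ being a convex cone, $x+t\in C$ with $\psi(x+t)\to\psi(t)$ as $C\ni x\to\theta$; dominated convergence then yields continuity of $D_{K,w,h}\psi$ at $\theta$.

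For $D_{K,w}$ write $(D_{K,w}\psi)(x)=(D_{K,w,h}\psi)(x)+R(x)$ with $R(x)=\int_{h\KC}w(|t|_K)\bigl(\psi(x)-\psi(x+t)\bigr)\,dt$; it remains to prove $R$ is continuous at $\theta$. Fix $\rho\in(0,h/2]$ and split $R=R_1^{\rho}+R_2^{\rho}$, the integrals over $C\cap\rho K$ and over $\{t\in C\colon\rho\le|t|_K<h\}$. If $|x|_K\le h/2$ and $|t|_K<\rho$, then $|x|_K,|x+t|_K\le h$, so $\psi(x)-\psi(x+t)=F(|x+t|_K)-F(|x|_K)$; subadditivity of the concave $F$ with $F(0)=0$ together with $\bigl||x+t|_K-|x|_K\bigr|\le|t|_K$ give $|\psi(x)-\psi(x+t)|\le F(|t|_K)$. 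Hence, by the curvilinear change of variables of Lemma~\ref{l::curvelinearCoords}, summed over the faces of $\KC$ as in the proof of Lemma~\ref{l::radialFunctionIntegral},
$$|R_1^{\rho}(x)|\le\int_{C\cap\rho K}w(|t|_K)F(|t|_K)\,dt=d\cdot\mes(\KC)\int_0^{\rho}w(\tau)\tau^{d-1}F(\tau)\,d\tau=:J(\rho)$$
uniformly over $x$ with $|x|_K\le h/2$, and $J(\rho)\to0$ as $\rho\to0^+$ by the convergence recorded above. For a fixed $\rho$, $w(|\cdot|_K)\in L_1(\{\rho\le|t|_K<h\})$ because $w\in\weightedLp_1(\rho,h)$, so dominated convergence (as for the truncated operator) gives $R_2^{\rho}(x)\to R_2^{\rho}(\theta)$ as $C\ni x\to\theta$. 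Given $\varepsilon>0$, choose $\rho$ with $J(\rho)<\varepsilon/3$ and then $x$ near $\theta$ (in particular $|x|_K\le h/2$) with $|R_2^{\rho}(x)-R_2^{\rho}(\theta)|<\varepsilon/3$; since $|R_1^{\rho}(x)|,|R_1^{\rho}(\theta)|\le J(\rho)$ this gives $|R(x)-R(\theta)|<\varepsilon$, proving continuity of $R$, hence of $D_{K,w}\psi$, at $\theta$.

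The main obstacle is the inner piece $R_1^{\rho}$: near $\theta$ the weight $w(|\cdot|_K)$ need not be integrable — precisely the situation in which $D_{K,w}$ is unbounded — so no constant bound on $\psi(x)-\psi(x+t)$ can work. The resolution is the monotonicity of $g_{w,h}$, equivalently the concavity of $F$, which upgrades the trivial Lipschitz estimate to $|\psi(x)-\psi(x+t)|\le F(|t|_K)$; the extra factor $F(|t|_K)$ supplies exactly the decay needed to absorb the singularity of $w$ at the origin uniformly in $x$, through the integrability $\int_0^{h}w(\tau)\tau^{d-1}F(\tau)\,d\tau<\infty$ established in the first step.
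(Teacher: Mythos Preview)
Your argument is correct, but it differs from the paper's. Both proofs handle $D_{K,w,h}\psi$ in essentially the same way (bounded integrand times an $L_1$ weight, then dominated convergence or uniform continuity), and both reduce continuity of $D_{K,w}\psi$ to that of the singular part $R(x)=\int_{h\KC}w(|t|_K)(\psi(x)-\psi(x+t))\,dt$. At this point the paper simply applies Theorem~\ref{th::mainInequality} to the function $\psi(\cdot)-\psi(\cdot+z)$, obtaining
\[
|R(\theta)-R(z)|\le\bigl\||\nabla\psi(\cdot)-\nabla\psi(\cdot+z)|_{\polarK}\bigr\|_{L_p(h\KC)}\,\|g_{w,h}(|\cdot|_K)\|_{L_{p'}(h\KC)},
\]
which tends to $0$ by continuity of translation in $L_p$. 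Your route instead exploits the explicit structure of $\psi$: the concavity of $F$ (equivalently the monotonicity of $g_{w,h}$) gives the pointwise estimate $|\psi(x)-\psi(x+t)|\le F(|t|_K)$, and the identity $\int_{h\KC}w(|y|_K)F(|y|_K)\,dy=\int_{h\KC}g_{w,h}^{p'}(|y|_K)\,dy$ (your use of Lemma~\ref{l::changeOrderTwice}) turns this into a uniform-in-$x$ bound on the near-origin piece. The paper's argument is shorter and conceptually cleaner, since it recycles the Ostrowski inequality already proved and would apply to any function in $W^{1,p}$, not just $\psi$; your argument is more elementary and self-contained, avoids invoking Theorem~\ref{th::mainInequality} as a black box, and yields an explicit uniform modulus $J(\rho)$ for the singular contribution.
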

\begin{proof}
The function $\psi$ is continuous on $\Rd$, and since it is constant outside of a compact set, it is uniformly continuous on $\Rd$. If $\varepsilon>0$ and $\delta>0$ is chosen in such a way that $|\psi(x)-\psi(y)|<\varepsilon$ whenever $|x-y|_2<\delta$, then for all $|z|_2<\delta$
\begin{gather*}
    |D_{K,w, h}\psi (\theta) - D_{K,w, h} \psi(z)|
    =
    \left |\,\int\limits_{C\setminus hK}w(|t|_K)(\psi (\theta)-\psi(t) - \psi(z) + \psi (t+z)) dt\right|
    \\ \leq 
     \left |\,\int\limits_{C\setminus hK}w(|t|_K)(\psi (\theta)- \psi (z)) dt\right| 
    +  \left |\,\int\limits_{C\setminus hK}w(|t|_K)(\psi (t+z)-\psi (t) ) dt\right|
    \\\leq 
    2\varepsilon\int\limits_{C\setminus hK}w(|t|_K)dt,
\end{gather*}
which implies continuity of $D_{K,w, h}\psi$ at $\theta$.

In order to prove continuity of $D_{K,w}\psi$ at $\theta$, it is sufficient to prove continuity of $(D_{K,w}-D_{K,w,h})\psi$ at $\theta$. 
Applying Theorem~\ref{th::mainInequality} to the function $\psi(\cdot) - \psi(\cdot +z)$, $z\in h\KC$, we obtain
\begin{multline*}
  |(D_{K,w}-D_{K,w,h})\psi(\theta) - (D_{K,w}-D_{K,w,h})\psi(z)|
\\\leq
\|\,|\nabla \psi(\cdot) - \nabla\psi(\cdot + z) |_{\polarK}\,\|_{L_p(h\KC)} \left\|g_{w,h}(|\cdot|_K) \right\|_{L_{p'}(h\KC)}  \to 0\text{ as } z\to\theta.
\end{multline*}

\end{proof}
\subsection{Proof of Theorem~\ref{th::additiveKolmogorov} }
\begin{proof}
For arbitrary  $f\in L_{\infty,p}^1(C)$
$$
\|D_{K,w} f\|_{L_\infty(C)}
\le 
\|D_{K,w} f-D_{K,w, h}  f\|_{L_\infty(C)}+\|D_{K,w, h}  f\|_{L_\infty(C)}.
$$
For all  $y\in C$, using Theorem~\ref{th::mainInequality}, one has
 \begin{gather*}
|D_{K,w} f (y)-D_{K,w, h}  f(y)|
=
\left|\,\int\limits_{h\KC} w(|t|_K)(f(y)-f(y+t)) dt\right|
\\\le
  \left(d\cdot\mes(\KC)\right)^{\frac 1{p'}} \left\|g_{w,h} \right\|_{\weightedLp_{p'}(0,h)} 
 \||\nabla f|_{\polarK}\|_{L_p(y + h\KC)}
 \\ \leq 
 \left(d\cdot\mes(\KC)\right)^{\frac 1{p'}} \left\|g_{w,h} \right\|_{\weightedLp_{p'}(0,h)} 
\||\nabla f|_{\polarK}\|_{L_p(C)},
\end{gather*}
and hence
$$  \|D_{K,w} f-D_{K,w, h}  f\|_{L_\infty(C)} 
    \leq 
    \left(d\cdot\mes(\KC)\right)^{\frac 1{p'}} \left\|g_{w,h} \right\|_{\weightedLp_{p'}(0,h)} 
\||\nabla f|_{\polarK}\|_{L_p(C)}.
$$
Taking into account this inequality and Lemma~\ref{l::norm-of-truncated}, we obtain inequality~\eqref{Kolm-add-th}. 

Next we prove its sharpness. Let $K\in \setsClass$ be a polytope first. Consider the function defined in~\eqref{extremalPsi}.
Note that  
$$
\psi(\theta) 
=
\frac 12 \int\limits_0^{h}g^{p'-1}_{w,h}(u)du 
=
\|\psi\|_{L_\infty(C)} = \|\psi\|_{L_\infty(\Rd)},
$$
and $\psi(t) =- \|\psi\|_{L_\infty(C)}$ for all $t\in C\setminus hK$.  Hence by Lemmas~\ref{l::norm-of-truncated} and~\ref{l::derivativesContinuity} we obtain 
$$  
\|D_{K, w, h}\psi\|_{L_\infty(C)} = D_{K, w, h}\psi(\theta). 
$$
Moreover, the restriction of $\psi$ to $h\KC$ is extremal in Theorem~\ref{th::mainInequality}, and hence
$$ 
(D_{K,w} -D_{K,w, h})  \psi(\theta) = 
\left(d\cdot\mes(\KC)\right)^{\frac 1{p'}} \left\|g_{w,h} \right\|_{\weightedLp_{p'}(0,h)} 
\||\nabla \psi|_{\polarK}\|_{L_p(C)}.
$$
Taking into account Lemma~\ref{l::derivativesContinuity}, we obtain the required.

Let now $K\in \setsClass$ be an arbitrary set. From the conditions of the theorem it follows that $w\in \weightedLp_1(\nu,\infty)$ for all $\nu>0$. Consider a  monotone by inclusion family of polytopes $K_\varepsilon\subset K$ that approximate $K$. From Theorem~\ref{th::mainInequality}, it follows that the family of functions $\psi_{K_\varepsilon, w, h}$ defined in~\eqref{extremalPsi} (more precisely, the family of their restrictions to $h\KC$) is an extremal family for the operator $D_{K,w} - D_{K,w,h}$. Due to Remark~\ref{r::embeddedPolytopes}, the same family is extremal for the operator $D_{K,w,h}$. This implies shaprness of~\eqref{Kolm-add-th}.
\end{proof}
\subsection{Proof of Theorem~\ref{th::StechkinProblem}}
\begin{proof}
We use the scheme outlined in Section~\ref{sec::notations} with $A = D_{K,w}$ and $\overline{S} = D_{K,w,h_N}$.
In the case, when $K$ is a polytope, the extremal sequence of $\{x_n\}$
can be chosen to be constant, $x_n = \psi_{K,w,h_N}$, $n\in\mathbb{N}$. In the case, when $K\in\setsClass$ is arbitrary, one can consider the sequence of functions $\psi_{K_{\varepsilon_n}, w, h_N}$, where $K_{\varepsilon_n}\subset K$ approximate $K$, $n\in\mathbb{N}$.

\end{proof}
\subsection{Proof of Corollary~\ref{c::multiplicativeKolmogorov}}
Here we sketch the proof of Corollary~\ref{c::multiplicativeKolmogorov}. Technical details can be found in Appendix~\ref{app::homogenuity}.

First of all, Lemma~\ref{l::powerWIsGood} from Appendix~\ref{app::homogenuity} shows that Theorem~\ref{th::additiveKolmogorov} is applicable for the weight $w$ defined in~\eqref{powerW}.
Using homogenuity of $w$, the inequality from Theorem~\ref{th::additiveKolmogorov} can be rewritten as 
\begin{equation}\label{additiveInequalityForHomW}
  \|D_{K,w} f\|_{L_\infty(C)}
 \le 
 X \||\nabla f|_{\polarK}\|_{L_p(C)} h^{1+\beta + \frac{d}{p'}} + Y
\|f\|_{L_\infty(C)}h^{d+\beta}, 
\end{equation}
where $\beta:=-(d+\gamma)$. Setting 
\begin{equation}\label{multKolmogorovH}
   h = \left(\frac{\|f\|_{L_\infty(C)}}{ \||\nabla f|_{\polarK}\|_{L_p(C)}}\cdot \frac{X^{p'-1}}{Z}\right)^{\frac{p}{p-d}}
\end{equation}
in inequality~\eqref{additiveInequalityForHomW}, we obtain the required. In the case, when $K$ is a polytope, direct computations show that functions~\eqref{extremalPsi} turn the inequality into equality.
\section{Applications}\label{sec::applications}
The function
$$
  \Omega(\delta):=\sup_{ \substack{{f\in W_{\infty,p}^K}\\
\|f\|_{L_\infty(C)}\le \delta}}\|D_{w,K}f\|_{L_\infty(C)},\, \delta \ge 0,
$$
 is called the modulus of continuity of the operator $D_{w,K}$ on the set $ W_{\infty,p}^K$. The problem of finding the function $\Omega(\delta)$   for a given operator on a given  set can be considered as an abstract version of the problem on the Landau-Kolmogorov type inequality.

Another related problem is as follows. Let $\mathcal{O}$ be the set of all operators $$A\colon L_\infty(C)\to L_\infty(C)$$ and $\mathcal{R}\subset \mathcal{O}$.
 For $\delta\ge 0$ set
\begin{equation}\label{optimal-recovery}
    \mathcal{E}_\delta (\mathcal{R})=\inf_{T\in \mathcal{R}}
    \sup_{\substack{f\in  W_{\infty,p}^K,\, g\in L_\infty(C)\\ \|f-g\|_{L_\infty(C)}\le \delta}}\|D_{K,w}f-Tg\|_{L_\infty(C)}.
\end{equation}

The problem of optimal recovery of the operator $D_{K,w}$ with the help of methods from the class $\mathcal{R}$ on the set of elements $W_{\infty,p}^K$ given with error $\delta$, consists in finding the value of quantity~\eqref{optimal-recovery} and the optimal method of recovery $T$, if it exists.

Denote by $\mathcal{L}$ the set of all bounded linear operators $T\colon L_\infty(C)\to L_\infty(C)$. The following result shows the relation between the considered extremal properties.

\begin{corollary}\label{c::modulusOfContinuity}
Let $C\in\conesClass$, $p\in (d,\infty]$, $K\in \setsClass$ be a polytope, $h>0$, $w\in \weightsClass_{p'}(0,h)\cap \weightedLp_1(h,\infty)$. Then 
\begin{multline}\label{additiveModulusOfContinuity}
   \mathcal{E}_{\delta_h} (\mathcal{O}) 
   =
   \mathcal{E}_{\delta_h} (\mathcal{L}) 
   =
   \Omega(\delta_h) 
   \\ =  
\left(d\cdot\mes(\KC)\right)^{\frac 1{p'}} \left\|g_{w,h} \right\|_{\weightedLp_{p'}(0,h)}
+2d \cdot \mes(\KC) \left(\int\limits_h^\infty w(\rho)\rho^{d-1}\, d\rho\right)\delta_h,
\end{multline}
where $g_{w,h}$ is defined in~\eqref{defG} and
$$
\delta_h = \frac{\frac 12\int_0^hg_{w,h}^{p'-1}(u)du}{\left(d\cdot \mes(\KC)\|g_{w,h}\|_{\weightedLp_{p'(0,h)}}^{p'}\right)^{1/p}}.
$$
If $w\in \weightsClass_{p'}^*(0,h)$, then equality~\eqref{additiveModulusOfContinuity} holds for arbitrary $K\in \setsClass$.
If $w$ is the weight defined in~\eqref{powerW}, then for all $\delta>0$
  \begin{equation}\label{multModulusOfContinuity}
   \mathcal{E}_{\delta} (\mathcal{O}) 
   =
   \mathcal{E}_{\delta} (\mathcal{L}) 
   =
   \Omega(\delta) =  \frac{X^{p'} + YZ}{X^{(p'-1)\alpha}\cdot Z^{1-\alpha}}\cdot \delta^{1-\alpha},
  \end{equation}
  where the numbers $X,Y,Z$ and $\alpha$ are as in Corollary~\ref{c::multiplicativeKolmogorov}.
\end{corollary}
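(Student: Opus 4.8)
The plan is to establish the chain of equalities by proving two inequalities that pinch everything together: a lower bound $\mathcal{E}_{\delta_h}(\mathcal{O}) \ge \Omega(\delta_h)$ that holds for the largest class of methods, and an upper bound $\mathcal{E}_{\delta_h}(\mathcal{L}) \le \text{(right-hand side)}$ that is achieved by a concrete linear method; since trivially $\mathcal{E}_{\delta_h}(\mathcal{O}) \le \mathcal{E}_{\delta_h}(\mathcal{L})$ and, by Theorem~\ref{th::additiveKolmogorov} evaluated at $\delta = \delta_h$, $\Omega(\delta_h)$ equals that same right-hand side, all four quantities coincide. First I would record the standard lower bound: for any method $T\in\mathcal{O}$, taking $g = \theta$ (the zero function) as the observed data forces $\sup_{f}\|D_{K,w}f - T\theta\|$ over $f\in W_{\infty,p}^K$ with $\|f\|_{L_\infty(C)}\le\delta_h$ to be at least $\sup_f \frac12\big(\|D_{K,w}f - T\theta\| + \|D_{K,w}(-f) - T\theta\|\big) \ge \sup_f \|D_{K,w}f\| = \Omega(\delta_h)$, using that $W_{\infty,p}^K$ and the constraint are symmetric under $f\mapsto -f$. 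Hence $\mathcal{E}_{\delta_h}(\mathcal{O})\ge\Omega(\delta_h)$.

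Next I would produce the optimal linear method. The natural candidate is $T = D_{K,w,h}$, the bounded truncated operator from~\eqref{truncatedOperator}, for the specific value $h$ appearing in the statement; the point of the particular choice of $\delta_h$ is exactly that it balances the two terms coming from Theorem~\ref{th::additiveKolmogorov}. For this $T$ and any admissible pair $(f,g)$ with $\|f-g\|_{L_\infty(C)}\le\delta_h$, write
$$
\|D_{K,w}f - D_{K,w,h}g\|_{L_\infty(C)} \le \|(D_{K,w}-D_{K,w,h})f\|_{L_\infty(C)} + \|D_{K,w,h}\|\cdot\|f-g\|_{L_\infty(C)}.
$$
The first summand is bounded by $\left(d\cdot\mes(\KC)\right)^{1/p'}\|g_{w,h}\|_{\weightedLp_{p'}(0,h)}$ by the deviation estimate established inside the proof of Theorem~\ref{th::additiveKolmogorov} (using $\||\nabla f|_{\polarK}\|_{L_p(C)}\le 1$), and the second is $2d\cdot\mes(\KC)\big(\int_h^\infty w(\rho)\rho^{d-1}d\rho\big)\delta_h$ by Lemma~\ref{l::norm-of-truncated}; their sum is precisely the claimed right-hand side, so $\mathcal{E}_{\delta_h}(\mathcal{L})\le\Omega(\delta_h)$, closing the chain for the polytope case. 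For arbitrary $K\in\setsClass$ under the additional hypothesis $w\in\weightsClass_{p'}^*(0,h)$, the upper bound argument is identical (the deviation estimate of Theorem~\ref{th::mainInequality} is valid there, and Lemma~\ref{l::norm-of-truncated} holds for arbitrary $K$), while for the lower bound I would invoke that $\Omega(\delta_h)$ still equals the right-hand side by the sharpness part of Theorem~\ref{th::additiveKolmogorov}, which uses the approximating family $\psi_{K_{\varepsilon_n},w,h}$; the symmetrization argument for $\mathcal{E}_{\delta_h}(\mathcal{O})\ge\Omega(\delta_h)$ did not use that $K$ is a polytope.

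Finally, for the multiplicative version~\eqref{multModulusOfContinuity} with the power weight~\eqref{powerW}, I would run the same three-part scheme but now with $h$ free: the lower bound $\mathcal{E}_\delta(\mathcal{O})\ge\Omega(\delta)$ is the same symmetrization, $\Omega(\delta)$ equals $\frac{X^{p'}+YZ}{X^{(p'-1)\alpha}Z^{1-\alpha}}\delta^{1-\alpha}$ by Corollary~\ref{c::multiplicativeKolmogorov} together with the sharpness there, and the upper bound comes from choosing, for a given noisy datum $g$, the method $D_{K,w,h(\delta)}$ with $h(\delta) = \big(\frac{\delta}{1}\cdot\frac{X^{p'-1}}{Z}\big)^{p/(p-d)}$ (the value~\eqref{multKolmogorovH} with $\|f\|_{L_\infty(C)}$ replaced by the known error bound $\delta$ and $\||\nabla f|_{\polarK}\|_{L_p(C)}$ by its bound $1$), and then repeating the two-term split above; optimizing over $h$ reproduces the constant. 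I expect the main obstacle to be a bookkeeping subtlety rather than a conceptual one: one must check that the optimal $h$ in the recovery problem depends only on the \emph{a priori} data $(\delta,1)$ and not on the unknown $f$ itself, so that $D_{K,w,h(\delta)}$ is a genuine (indeed linear and bounded) recovery method; and in the $\mathcal{E}_\delta$ definition the supremum runs over all $(f,g)$ with $\|f-g\|\le\delta$, $f\in W_{\infty,p}^K$, $g\in L_\infty(C)$ arbitrary, so one must confirm the two-term estimate is uniform over such $g$, which it is because only $\|f-g\|_{L_\infty(C)}$ enters.
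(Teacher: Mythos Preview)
Your proposal is correct and follows essentially the same route as the paper's proof: the symmetrization lower bound $\mathcal{E}_\delta(\mathcal{O})\ge\Omega(\delta)$ via $g=0$, the upper bound $\mathcal{E}_\delta(\mathcal{L})\le\text{RHS}$ via the truncated operator $D_{K,w,h}$, and the identification of $\Omega(\delta_h)$ with the right-hand side through the extremal function of Theorem~\ref{th::additiveKolmogorov}. One small clarification worth making explicit in your write-up is that $\delta_h$ is not a ``balancing'' parameter but is precisely the ratio $\|\psi_{K,w,h}\|_{L_\infty(C)}\big/\||\nabla\psi_{K,w,h}|_{\polarK}\|_{L_p(C)}$, so that the normalized extremal function lands exactly in the admissible set $\{f\in W_{\infty,p}^K:\|f\|_{L_\infty(C)}\le\delta_h\}$ and attains the value $\Omega(\delta_h)$; this is what the paper verifies via the computations in~\eqref{psiNorm}--\eqref{nablaPsiNorm}.
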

Having the tools developed above, this result can be proved by standard arguments, see e.g.~\cite[Theorem~7.1.2]{BKKP}. For completeness, we also give a direct proof in Appendix~\ref{app::modulusOfContinuityProof}.

\appendices
\section{Approximation of convex bodies by polytopes}
\subsection{Proof of Lemma~\ref{l::secondSummand}}\label{app::secondSummandProof}
\begin{proof}
We prove~\eqref{secondSummand} first. Since $\theta\in{\rm int\,}K_{\varepsilon}$  and $K_\varepsilon\subset K$, there exists $\nu= \nu(\varepsilon) \in (0,1)$ such that  for all $y\in K$,
$$ 
\nu |y|_{K_\varepsilon} \leq |y|_K \leq |y|_{K_\varepsilon},
$$
and $\nu(\varepsilon)\to 1$ as $\varepsilon\to 0$. 
 Applying Lemma~\ref{l::LagrangeTh},
 we obtain 
\begin{gather*}
\left|\int\limits_{\,hK_\varepsilon\cap C}[w(|y|_{K_\varepsilon}) - w(|y|_K)][f(y) - f(\theta)]dy\right| 
\\=
\left|\int\limits_{\,hK_\varepsilon\cap C}[w(|y|_{K_\varepsilon}) - w(|y|_K)]\int\limits_0^1(y,\nabla f(ty))dtdy\right|
\\  \leq 
\int\limits_{hK_\varepsilon\cap C}\int\limits_0^1|w(|y|_{K_\varepsilon}) - w(|y|_K)||y|_{K_\varepsilon}|\nabla f(ty)|_{\polarK_\varepsilon}dtdy 
\\\leq
\int\limits_{hK_\varepsilon\cap C}\int\limits_0^1w_\nu(|y|_{K_\varepsilon})|y|_{K_\varepsilon}|\nabla f(ty)|_{\polarK_\varepsilon}dtdy,
\end{gather*}
where $w_\nu$ is defined in~\eqref{wNu}. Applying Lemma~\ref{l::changeOrderTwice} and the H\"{o}lder inequality, we obtain the estimate
\begin{multline*}
\left|\int\limits_{\,hK_\varepsilon\cap C}[w(|y|_{K_\varepsilon}) - w(|y|_K)][f(y) - f(\theta)]dy\right| 
\\\leq
\| |\nabla f|_{\polarK_\varepsilon}\|_{L_p(hK_\varepsilon\cap C)} \left\|g_{w_\nu, h}(|\cdot|_{K_\varepsilon})\right\|_{L_{p'}(hK_\varepsilon\cap C)}.
\end{multline*}

Applying inequality~\eqref{polarNormsComparison}, Lemma~\ref{l::radialFunctionIntegral} and assumption~\eqref{defDiffIsSmall1}, we obtain the required.

Next we prove~\eqref{thirdSummand}.
\begin{gather*}
    \left|\int\limits_{\,h(K\setminus K_\varepsilon)\cap C}w(|y|_{K})[f(y) - f(\theta)]dy\right|
    \leq 
\int\limits_0^1\int\limits_{h(K\setminus K_\varepsilon)\cap C}w(|y|_K)|y|_{K}|\nabla f(ty)|_{\polarK}dydt
\\ \leq 
\int\limits_0^1
\left(\int\limits_{h(K\setminus K_\varepsilon)\cap C}(w(|y|_K)|y|_{K})^{p'}dy\right)^{\frac{1}{p'}}\left(\int\limits_{h(K\setminus K_\varepsilon)\cap C}|\nabla f(ty)|_{\polarK}^{p}dy\right)^{\frac{1}{p}}dt
\\=
\left(\int\limits_{h(K\setminus K_\varepsilon)\cap C}(w(|y|_K)|y|_{K})^{p'}dy\right)^{\frac{1}{p'}}
\int\limits_0^1 t^{-\frac dp}
\left(\int\limits_{ht(K\setminus K_\varepsilon)\cap C}|\nabla f(z)|_{\polarK}^{p}dz\right)^{\frac{1}{p}}dt.
\end{gather*}
Since $p>d$, and hence $-\frac dp > -1$, and $\mes ht(K\setminus K_\varepsilon)\leq \mes h(K\setminus K_\varepsilon) \to 0$, as $\varepsilon\to 0$, we obtain that 
$$
\int\limits_0^1 t^{-\frac dp}
\left(\int\limits_{ht(K\setminus K_\varepsilon)\cap C}|\nabla f(z)|_{\polarK}^{p}dz\right)^{\frac{1}{p}}dt \to 0,\text{ as } \varepsilon\to 0.
$$
The set $h(K\setminus K_\varepsilon)$ is separated from $\theta$ for all small enough $\varepsilon>0$, so the integral $\int\limits_{h(K\setminus K_\varepsilon)\cap C}(w(|y|_K)|y|_{K})^{p'}dy$ is finite, since $w\in \weightedLp_{p'}(\eta, h)$ for all $\eta>0$.
\end{proof}
\subsection{Proof of Lemma~\ref{l::intw(|t|)Estimate}}\label{app::intw(|t|)EstimateProof}
\begin{proof}
Since $K_\varepsilon\in \setsClass$ and $K_0\subset K_\varepsilon\subset K_1$ for all $\varepsilon\in [0,1]$, there exist $0<r<R$ such that $B_r\subset K_\varepsilon \subset B_R$, for all $\varepsilon\in [0.1]$, where  $B_r$ and $B_R$ are balls in $\Rd$ with center $\theta$ and radii $r$ and $R$ respectively. 

We show that all functions $w(|\cdot|_{K_\varepsilon})$ are integrable on $A$, $\varepsilon\in [0,1]$.

Observe that for each $\varepsilon>0$ the function $t\to |t|_{K_\varepsilon}\colon \Rd\to\RR$ is Lipshitz. Indeed, for arbtitrary $t,s\in\Rd$, $
|t|_{K_\varepsilon}= |t-s+s|_{K_\varepsilon}\leq |t-s|_{K_\varepsilon}+|s|_{K_\varepsilon}, 
$
and hence, due to equivalence of finite dimensional norms,
$$
|t|_{K_\varepsilon} - |s|_{K_\varepsilon}\leq |t-s|_{K_\varepsilon}\leq C |t-s|_2
$$
with some $C>0$. Interchanging $t$ and $s$, we obtain the required. By the Rademacher theorem, we obtain that the gradient $\nabla|\cdot|_{K_\varepsilon}$ exists almost everywhere.  For arbitrary $\theta\neq x\in\Rd$ such that $\nabla|x|_{K_\varepsilon}$ exists, set $v = \frac{x}{|x|_2}$. Then
\begin{equation}\label{gradientFromBelow}
    |\nabla|x|_{K_\varepsilon}|_2 \geq (\nabla|x|_{K_\varepsilon}, v)
    =
    \lim_{s\to 0}\frac{|x+s\cdot v|_{K_\varepsilon} - |x|_{K_\varepsilon}}{s}\geq \frac{1}{R}.
\end{equation}
  Using the coarea formula and~\eqref{gradientFromBelow}, we obtain
 \begin{multline}\label{intW|t|Estimate}
\int\limits_{A}w(|t|_{K_\varepsilon}) dt
\leq 
R\int\limits_{A}w(|t|_{K_\varepsilon})|\nabla|t|_{K_\varepsilon}|_2 dt
\\\leq 
R\int_{h}^H\int_{|s|_{K_\varepsilon}=\rho}w(|s|_{K_\varepsilon})dsd\rho
=
R\cdot \mes \partial K_\varepsilon\int_h^H w(\rho)\rho^{d-1}d\rho,
\end{multline}
which implies integrability of $w(|\cdot|_{K_\varepsilon})$. Due to absolute continuity of the integral, for each $\gamma>0$ there exists $\nu_0(\gamma)>0$ such that $\int_B w(|t|_{K_0})dt < \gamma$ provided $\mes B\leq \nu_0(\gamma)$.

Since $K_0\subset K_\varepsilon$ for all $\varepsilon\in [0,1]$ and $K_\varepsilon\in \setsClass$, there exists $\beta\in (0,1]$ such that $\beta|t|_{K_0}\leq |t|_{K_\varepsilon}\leq  |t|_{K_0}$ for all $t\in \Rd$. 
For each $\varepsilon\in [0,1]$ and $\rho$ from the unit sphere $S^{d-1}$ of $\Rd$, set $\alpha(\rho,\varepsilon) = \frac{|\rho|_{K_\varepsilon}}{|\rho|_{K_0}}$.  Then $1\geq \alpha(\rho,\varepsilon)\geq  \beta>0$ for all $\rho\in S^{d-1}$ and $\varepsilon\in [0,1]$. 

Let $B\subset A$ be a measurable set. For $\rho\in S^{d-1}$ and $\varepsilon\in [0,1]$, let $l(B;\rho, \varepsilon)$ be the set of values $|t|_{K_\varepsilon}$ for points $t\in B$ that belong to the ray with origin at $\theta$ and direction $\rho$. 

Let $\varepsilon\in [0,1]$. Together with  the set $B$ we consider the set $B_\varepsilon$ such that $l(B_\varepsilon;\rho,\varepsilon) = \alpha(\rho,\varepsilon)l(B;\rho,\varepsilon)$ for all $\rho\in S^{d-1}$. Then $\mes B_\varepsilon \leq \mes B$ and
\begin{gather*}
    \int_B w(|t|_{K_\varepsilon})dt 
    =
    \int_{S^{d-1}} \int_{l(B;\rho,\varepsilon)}s^{d-1} w(s)dsd\rho
    \\=
    \int_{S^{d-1}} \int_{\alpha(\rho,\varepsilon)l(B;\rho,\varepsilon)}\frac{u^{d-1}}{\alpha^{d}(\rho,\varepsilon)} w\left(\frac{u}{\alpha(\rho,\varepsilon)}\right)dud\rho
    \\\leq 
    \frac{1}{\beta^d}\int_{S^{d-1}} \int_{l(B_\varepsilon;\rho,\varepsilon)}u^{d-1} w\left(\frac{u}{\alpha(\rho,\varepsilon)}\right)dud\rho
    =
    \frac{1}{\beta^d}\int_{B_\varepsilon}w(|t|_{K_0})dt.
\end{gather*}
Hence for each $\gamma>0$ and $\varepsilon \in [0,1]$,  one has
$$
   \int_B w(|t|_{K_\varepsilon})dt \leq  \frac{1}{\beta^d}\int_{B_\varepsilon}w(|t|_{K_0})
   \leq \gamma,
$$
provided $\mes B <\nu_0(\gamma\cdot \beta^d)$.
\end{proof}

\subsection{Proof of Lemma~\ref{l::fourthSummand}}\label{app::fourthSummandProof}
\begin{proof}
For convenience we set $K_0 = K$. Since  $w\in \weightedLp_1(h,+\infty)$, we have 
\begin{equation}\label{tailIsSmall}
    \int_H^\infty w(r)r^{d-1}dr = o(1)\text{ as } H\to\infty.
\end{equation}
For all $H>h$,
\begin{gather*}
\int\limits_{\Rd\setminus hK_\varepsilon}|w(|t|_K)-
w(|t|_{K_\varepsilon})|dt 
\\=
\int\limits_{HK_\varepsilon\setminus hK_\varepsilon}|w(|t|_K)-
w(|t|_{K_\varepsilon})|dt 
 +
\int\limits_{\Rd\setminus HK_\varepsilon}|w(|t|_K)-
w(|t|_{K_\varepsilon})|dt 
\\ \leq 
\int\limits_{HK_\varepsilon\setminus hK_\varepsilon}|w(|t|_K)-
w(|t|_{K_\varepsilon})|dt 
 +
\int\limits_{\Rd\setminus HK}w(|t|_K)dt 
 +
\int\limits_{\Rd\setminus HK_\varepsilon}w(|t|_{K_\varepsilon})dt.
\end{gather*}
Using~\eqref{tailIsSmall} and the same arguments as in the proof of~\eqref{intW|t|Estimate}, one can prove that the last two summands tend to $0$ as $H\to \infty$ uniformly on $\varepsilon$. Hence for arbitrary $\gamma>0$ one can choose $H$ such that the sum of the two last summands is less than $\gamma$ for all  $\varepsilon\in [0,1]$. 

Next, having the chosen above $H$ fixed, we estimate the  first summand. 

Using the Luzin theorem, we find a continuous on $A:=\left[h,\sup_{t\in HK_1\setminus hK}|t|_K\right]$ function $\overline{w}$ that is different from $w$ on a set of points $E\subset A$ such that
$$
\mes  \partial K_1\int_{E}s^{d-1} ds < \nu,
$$
where, using Lemma~\ref{l::intw(|t|)Estimate}, $\nu>0$ is chosen in a such a way that 
$
\int\limits_{B}w(|t|_{K_\varepsilon})dt < \gamma
$
 for all $\varepsilon\in [0,1]$ and for each measurable $B\subset HK_1\setminus hK$, provided $\mes B<2\nu$. For $\varepsilon\in [0,1]$ set
$
E_\varepsilon:=\{t\in HK_1\setminus hK\colon |t|_{K_\varepsilon}\in E\}.
$
Since the sets $K_\varepsilon$ are embedded and convex, $\mes \partial K_\varepsilon$ is a non-decreasing function of $\varepsilon\in [0,1]$,  see e.g.\cite[\S7]{bonnesen}. Hence for each $\varepsilon \in [0,1]$,
\begin{equation}\label{mesE}
 \mes  E_\varepsilon 
= 
\int_{E}\mes  \partial (sK_\varepsilon)ds 
=
\int_{E}s^{d-1}\mes  \partial K_\varepsilon ds
\leq 
\mes  \partial K_1\int_{E}s^{d-1} ds < \nu.   
\end{equation}

Then for each $\varepsilon\in (0,1]$, one has
\begin{gather*}
    \int\limits_{HK_\varepsilon\setminus hK_\varepsilon}|w(|t|_K)-
w(|t|_{K_\varepsilon})|dt 
\\\leq 
    \int\limits_{(HK_\varepsilon\setminus hK_\varepsilon)\setminus(E_0\cup E_\varepsilon)}|w(|t|_K)-
w(|t|_{K_\varepsilon})|dt 
+
    \int\limits_{E_0\cup E_\varepsilon}|w(|t|_K)-
w(|t|_{K_\varepsilon})|dt 
\\\leq 
 \int\limits_{HK_1\setminus hK}|\overline{w}(|t|_K)-
\overline{w}(|t|_{K_\varepsilon})|dt 
+
 \int\limits_{E_0\cup E_\varepsilon}w(|t|_K)dt 
 +
 \int\limits_{E_0\cup E_\varepsilon}w(|t|_{K_\varepsilon})dt.
\end{gather*}
By the choice of $\nu$ and~\eqref{mesE}, we obtain that 
$$
 \int\limits_{E_0\cup E_\varepsilon}w(|t|_K)dt 
 +
 \int\limits_{E_0\cup E_\varepsilon}w(|t|_{K_\varepsilon})dt\leq 2\gamma
$$
for all $\varepsilon\in [0,1]$. Since $\overline{w}$ is continuous on a compact set $A$, it is uniformly continuous on it. Hence there exists $\delta>0$ such that $|\overline{w}(t_1)- \overline{w}(t_2)|<\frac{\gamma}{\mes HK_1\setminus hK},$ provided $|t_1-t_2|_2<\delta$. For all small $\varepsilon>0$ one has $||t|_K - |t|_{K_\varepsilon}|< \delta$ for all $t\in HK_1\setminus hK$, hence for such $\varepsilon$ we obtain

$$
\int\limits_{HK_\varepsilon\setminus hK_\varepsilon}|\overline{w}(|t|_K)-
\overline{w}(|t|_{K_\varepsilon})|dt < \gamma.
$$
This implies the required.
\end{proof}
\section{Case of homogeneous weight $w$}\label{app::homogenuity}
\subsection{Examples of functions from $\weightsClass_{p'}(0,h)$ and $\weightsClass_{p'}^*(0,h)$}
\begin{lemma}\label{l::powerWIsGood}
Let $p>d$, $h>0$, and $w$ be non-negative and such that $w(t) \cdot t^{d+\alpha}$ is bounded on $(0,h)$ for some $0<\alpha < 1-\frac dp$.
Then $w\in \weightsClass_{p'}(0,h)$. The function $w(t) = \frac {1}{t^{d+\alpha}}$ belongs to $\weightsClass_{p'}^*(0,h)$.
\end{lemma}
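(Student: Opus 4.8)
The plan is to reduce both assertions to one elementary estimate on $g_{w,h}$, exploiting that the hypothesis $w(t)t^{d+\alpha}\le M$ forces the power bound $w(t)\le Mt^{-(d+\alpha)}$ on $(0,h)$; note also $p'<\infty$ since $p>d\ge 2$, so only the integral form of $\|\cdot\|_{\weightedLp_{p'}}$ is in play.

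First I would verify the defining conditions of $\weightsClass_{p'}(0,h)$. Membership $w\in\weightedLp_1(u,h)$ for every $u\in(0,h)$ is immediate from $t^{d-1}w(t)\le Mt^{-1-\alpha}$. For the decisive condition, the same bound gives
\[
g_{w,h}(u)=\frac{1}{u^{d-1}}\int_u^h w(t)t^{d-1}\,dt\le \frac{M}{u^{d-1}}\int_u^h t^{-1-\alpha}\,dt\le \frac{M}{\alpha}\,u^{-(d-1+\alpha)},
\]
hence $u^{d-1}g_{w,h}^{p'}(u)\le (M/\alpha)^{p'}u^{\,d-1-(d-1+\alpha)p'}$. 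This exponent exceeds $-1$ precisely when $\alpha<d/p'-(d-1)$, and since $p^{-1}+(p')^{-1}=1$ yields $d/p'=d-\frac dp$, the condition is exactly the assumed $\alpha<1-\frac dp$. Therefore $\int_0^h u^{d-1}g_{w,h}^{p'}(u)\,du<\infty$, i.e. $g_{w,h}\in\weightedLp_{p'}(0,h)$, so $w\in\weightsClass_{p'}(0,h)$.

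For the second assertion, with $w(t)=t^{-(d+\alpha)}$ the key observation is that monotonicity turns $w_\nu$ into a scalar multiple of $w$: since $w$ is decreasing, the supremum in~\eqref{wNu} is attained at $t=\nu u$, so $w_\nu(u)=(\nu u)^{-(d+\alpha)}-u^{-(d+\alpha)}=c_\nu w(u)$ with $c_\nu:=\nu^{-(d+\alpha)}-1\to 0$ as $\nu\to1-0$. Then $w_\nu(t)t^{d+\alpha}\equiv c_\nu$ is bounded, so $w_\nu\in\weightsClass_{p'}(0,h)$ by the first part, while $w\in\weightedLp_{p'}(u,h)$ for each $u\in(0,h)$ is clear. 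Since $v\mapsto g_{v,h}$ is linear, $g_{w_\nu,h}=c_\nu g_{w,h}$, so $\|g_{w_\nu,h}\|_{\weightedLp_{p'}(0,h)}=c_\nu\|g_{w,h}\|_{\weightedLp_{p'}(0,h)}\to 0$ as $\nu\to1-0$ (the factor $\|g_{w,h}\|_{\weightedLp_{p'}(0,h)}$ being finite by the first part). This is~\eqref{defDiffIsSmall1}, and hence $w\in\weightsClass_{p'}^*(0,h)$.

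I do not expect a genuine obstacle here: the only point requiring care is the bookkeeping that converts the integrability threshold $d-1-(d-1+\alpha)p'>-1$ into exactly the stated range of $\alpha$ via the conjugacy relation, together with the elementary remark that passing from $w$ to $w_\nu$ merely rescales $g_{w,h}$ by a factor tending to $0$.
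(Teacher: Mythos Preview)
Your proof is correct and follows essentially the same approach as the paper's: both bound $g_{w,h}$ via the power estimate $w(t)\le Ct^{-(d+\alpha)}$ and reduce the $\weightedLp_{p'}$-finiteness to the exponent inequality equivalent to $\alpha<1-d/p$, and both handle the second assertion by observing that $w_\nu$ is a scalar multiple $c_\nu w$ with $c_\nu\to 0$. Your write-up is in fact slightly more complete, as you explicitly verify the auxiliary conditions $w\in\weightedLp_1(u,h)$ and $w\in\weightedLp_{p'}(u,h)$ and spell out the linearity step $g_{w_\nu,h}=c_\nu g_{w,h}$.
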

\begin{proof}
The following inequalities are valid with some constants $C_1, C_2>0$
\begin{gather*}
\int\limits_{0}^hg_{w,h}^{p'}(u)u^{d-1}du 
=
\int\limits_{0}^h\left(\frac{1}{u^{d-1}}\int\limits_u^hw(t)t^{d-1}dt\right)^{p'}u^{d-1}du
\\ \leq 
C_1\int\limits_{0}^h\left(\frac{1}{u^{d-1}}\int\limits_u^h\frac{1}{t^{1+\alpha}}dt\right)^{p'}u^{d-1}du
 =
C_2\int\limits_{0}^h\left(\frac{u^{-\alpha} - h^{-\alpha}}{u^{d-1}}\right)^{p'}u^{d-1}du
\\ =
C_2\int\limits_{0}^h\left(u^{-\alpha} - h^{-\alpha}\right)^{p'}u^{(1-p')(d-1)}du
 <\infty,
\end{gather*}
since 
$$
(1-p')(d-1)-\alpha p' 
> 
d-1 - p'd+p' -p' +\frac{p'd}{p} 
=-1.
$$
To prove the last statment of the lemma, it is enough to notice, that 
$$
\sup\limits_{s\in [t\nu,t]}|w(t)-w(s)| = \frac{1}{t^{d+\alpha}}\cdot \left(1 - \frac {1}{\nu^{d+\alpha}}\right).$$
\end{proof}
\subsection{Proof of Corollary~\ref{c::multiplicativeKolmogorov}}
\begin{proof}
If $w$ is defined in~\eqref{powerW}, then $w(hs) = h^{\beta}w(s)$ for all $h,s>0$, where $\beta = -(d+\gamma)$. Hence for the function $g_{w,h}$ defined in~\eqref{defG}, one has 
\begin{multline*}
g_{w,h}(h\cdot u) 
=
\frac {1} {(hu)^{d-1}} \int\limits_{hu}^hw(t)t^{d-1}dt 
=
\frac {1} {(hu)^{d-1}} \int\limits_{u}^1w(hs)(hs)^{d-1}d(hs)
\\=
\frac {h^{1+\beta}} {u^{d-1}} \int\limits_{u}^1w(s)s^{d-1}ds
=
h^{1+\beta} g_{w,1}(u) ,u\in [0,1],
\end{multline*}
\begin{multline}\label{constantA}
\|g_{w,h}\|_{\weightedLp_{p'}(0,h)}^{p'} 
=
\int_0^h g_{w,h}^{p'}(u) u^{d-1}du
=
\int_0^1 g_{w,h}^{p'}(hv) (hv)^{d-1}d(hv)
\\=
h^{(1+\beta)p'+d}\int_{0}^1g_{w,1}^{p'}(v)v^{d-1}dv
=
h^{(1+\beta)p'+d}\|g_{w,1}\|_{\weightedLp_{p'}(0,1)}^{p'},
\end{multline}
\begin{equation}\label{constantC}
    \int_0^hg_{w,h}^{p'-1}(u)du 
    =     
    \int_0^1g_{w,h}^{p'-1}(hv)d(hv) 
    = 
    h^{(1+\beta)(p'-1)+1}    \int_0^1g_{w,1}^{p'-1}(v)dv 
\end{equation}
and
\begin{equation}\label{constantB}
\int_{h}^\infty w(\rho) \rho^{d-1}d\rho
=
h^{d+\beta}\int_1^\infty w(r)r^{d-1}dr= \frac{h^{d+\beta}}{\gamma}.
\end{equation}
Hence the inequality from Theorem~\ref{th::additiveKolmogorov} can be rewritten as~\eqref{additiveInequalityForHomW}. 
Set $h$ according to~\eqref{multKolmogorovH} 
and using equalities
$$
1-\left(1+\beta + \frac{d}{p'}\right)\cdot \frac{p}{p-d} 
=
\frac{p-d-\left(p+p\beta+ \frac{p}{p'}d\right)}{p-d} 
=
-\frac{p\beta+pd}{p-d} = \alpha,
$$ 
we obtain
\begin{gather*}
\|D_{K,w} f\|_{L_\infty(C)}
\leq 
 X \||\nabla f|_{\polarK}\|_{L_p(C)}
 \left(\frac{\|f\|_{L_\infty(C)}}{ \||\nabla f|_{\polarK}\|_{L_p(C)}}\cdot \frac{X^{p'-1}}{Z}\right)^{1-\alpha}
 \\ + 
 Y
\|f\|_{L_\infty(C)}  \left(\frac{\|f\|_{L_\infty(C)}}{ \||\nabla f|_{\polarK}\|_{L_p(C)}}\cdot \frac{X^{p'-1}}{Z}\right)^{-\alpha}
\\=
\frac{X^{p'} + YZ}{X^{(p'-1)\alpha}\cdot Z^{1-\alpha}}\|f\|_{L_\infty(C)}^{1-\alpha}\||\nabla f|_{\polarK}\|_{L_p(C)}^\alpha,
\end{gather*}
which proves~\eqref{multInequality}.

Let $K$ be a polytope now. We prove that inequality~\eqref{multInequality} becomes equality for each function $\psi_{K,w, h}$ defined in~\eqref{extremalPsi}.
Using~\eqref{constantC}, we obtain
\begin{equation}\label{psiNorm}
\|\psi_{K,w, h}\|_{L_\infty(C)} = \psi_{K,w, h}(\theta) =  \frac12\int_0^hg_{w,h}^{p'-1}(u)du = 
 h^{(1+\beta)(p'-1)+1} Z.
\end{equation}
From Lemmas~\ref{l::radialFunctionIntegral} and~\ref{l::extremalFunction} and equality~\eqref{constantA} it follows that
\begin{multline}\label{nablaPsiNorm}
\||\nabla \psi_{K,w, h}|_{\polarK}\|_{L_p(C)}
=
\left(\int_{h\KC}g_{w,h}^{(p'-1)p}(|y|_K)dy\right)^{1/p}
\\=
\left(\int_{h\KC}g_{w,h}^{p'}(|y|_K)dy\right)^{1/p}
=
\left(d\cdot \mes(\KC)\|g_{w,h}\|_{\weightedLp_{p'(0,h)}}^{p'}\right)^{1/p}
\\=
\left(d\cdot  \mes(\KC) h^{(1+\beta)p'+d}\|g_{w,1}\|_{\weightedLp_{p'(0,1)}}^{p'}\right)^{1/p}
=
X^{\frac{p'}{p}}h^{\frac{(1+\beta)p'+d}{p}}
=
X^{p'-1}h^{\frac{(1+\beta)p'+d}{p}}.
\end{multline}
Since the function $\psi_{K,w, h}$ is extremal in Theorem~\ref{th::additiveKolmogorov}, using~\eqref{constantB} we obtain
\begin{multline}\label{DpsiNorm}
\|D_{K,w} \psi_{K,w, h}\|_{L_\infty(C)}
=
  X \||\nabla \psi_{K,w, h}|_{\polarK}\|_{L_p(C)} h^{1+\beta + \frac{d}{p'}} + Y
\|\psi_{K,w, h}\|_{L_\infty(C)}h^{d+\beta}
\\=
X^{p'}h^{1+\beta + \frac{d}{p'}+\frac{(1+\beta)p'+d}{p}} + YZh^{d+\beta+(1+\beta)(p'-1)+1} = (X^{p'} + YZ)h^{d+(1+\beta)p'}.
\end{multline}
Using the latter three equalities and direct computations, we obtains that~\eqref{multInequality} becomes equality for the function $\psi_{K,w, h}$.

Sharpness of inequality~\eqref{multInequality} with arbitrary $K\in\setsClass$ can be obtained using approximation of $K$ by polytopes. 
\end{proof}

\section{Proof of Corollary~\ref{c::modulusOfContinuity}}\label{app::modulusOfContinuityProof}
\begin{proof}
Assume that $K$ is a polytope.
First, we prove the statement about the quantity $\Omega$. The fact, that $\Omega(\delta_h)$ and $\Omega(\delta)$ on left-hand sides of~\eqref{additiveModulusOfContinuity} and~\eqref{multModulusOfContinuity} do not exceed corresponding right-hand sides follows from Theorem~\ref{th::additiveKolmogorov} and Corollary~\ref{c::multiplicativeKolmogorov} respectively.

 For each $h>0$, the function
$$
\overline{\psi}_{K,w,h} = \frac{\psi_{K,w,h}}{\| |\nabla \psi_{K,w,h}|_{\polarK}\|_{L_p(h\KC)}}
$$
is extremal in inequality~\eqref{Kolm-add-th}, where $\psi_{K,w,h}$ is defined in~\eqref{extremalPsi}. Moreover, 
$$\| |\nabla \overline{\psi}_{K,w,h}|_{\polarK}\|_{L_p(C)} = \| |\nabla \overline{\psi}_{K,w,h}|_{\polarK}\|_{L_p(h\KC)} =  1,$$
and using the first three equalities in~\eqref{nablaPsiNorm} we obtain
$\|\overline{\psi}_{K,w,h}\|_{L_\infty(C)} 
=\delta_h,
$
which implies the last equality in~\eqref{additiveModulusOfContinuity}.

Let $w$ be the power function~\eqref{powerW} and $\beta=-(d+\gamma)$. Then the exponents on the right-hand sides of~\eqref{psiNorm} and~\eqref{DpsiNorm} are distinct, and hence for each $\delta>0$ one can find $\nu, h>0$ such that for the function $f = \nu \psi_{K,w,h}$ one has $\|f\|_{L_\infty(C)} = \delta$ and $\||\nabla f|_{\polarK}\|_{L_p(C)} = 1$. Since each such function $f$ is extremal in~\eqref{multInequality}, this proves  the last equality in~\eqref{multModulusOfContinuity}.

Denote by $\mathfrak{0}$ the identical zero function. For arbitrary $\delta>0$ and $T\in \mathcal{O}$, 
\begin{gather*}
\sup_{\substack{f\in  W_{\infty,p}^K,\, g\in L_\infty(C)\\ \|f-g\|_{L_\infty(C)}\le \delta}}\|D_{K,w}f-Tg\|_{L_\infty(C)}
\geq 
\sup_{\substack{f\in  W_{\infty,p}^K,\|f\|_{L_\infty(C)}\le \delta}}\|D_{K,w}f-T\mathfrak{0}\|_{L_\infty(C)}
\\ =
\sup_{\substack{f\in  W_{\infty,p}^K,\|f\|_{L_\infty(C)}\le \delta}}\max\left\{\|D_{K,w}f-T\mathfrak{0}\|_{L_\infty(C)}, \|D_{K,w}(-f)-T\mathfrak{0}\|_{L_\infty(C)}\right\}
\\\geq 
\frac 12\sup_{\substack{f\in  W_{\infty,p}^K,\|f\|_{L_\infty(C)}\le \delta}}\left(\|D_{K,w}f-T\mathfrak{0}\|_{L_\infty(C)}+ \|D_{K,w}f+T\mathfrak{0}\|_{L_\infty(C)}\right)
\\\geq 
\sup_{\substack{f\in  W_{\infty,p}^K,\|f\|_{L_\infty(C)}\le \delta}}\|D_{K,w}f\|_{L_\infty(C)} = \Omega(\delta),
\end{gather*}
and hence  
\begin{equation}\label{e>omega}
    \mathcal{E}_{\delta} (\mathcal{O}) \geq \Omega(\delta).
\end{equation}

On the other hand, for each $h,\delta>0$,
\begin{multline}\label{e<omega}
\mathcal{E}_{\delta} (\mathcal{O})\leq \mathcal{E}_{\delta} (\mathcal{L})\leq 
\sup_{f\in  W_{\infty,p}^K}\|D_{K,w}f-D_{K,w,h}f\|_{L_\infty(C)} + \|D_{K,w,h}\|\cdot \delta
\\ =
\left(d\cdot\mes(\KC)\right)^{\frac 1{p'}} \left\|g_{w,h} \right\|_{\weightedLp_{p'}(0,h)}
+2d \cdot \mes(\KC) \left(\int\limits_h^\infty w(\rho)\rho^{d-1}\, d\rho\right)\delta,
\end{multline}
which together with~\eqref{e>omega} finishes the proof of~\eqref{additiveModulusOfContinuity}. 

In the case of $w$ defined by~\eqref{powerW}, setting in~\eqref{e<omega}
$$
h = \left(\delta\cdot \frac{X^{p'-1}}{Z}\right)^{\frac{p}{p-d}},
$$
and applying the same computations as in the proof of Corollary~\ref{c::multiplicativeKolmogorov}, we obtain $\mathcal{E}_{\delta} (\mathcal{O})\leq \Omega(\delta)$, which together with~\eqref{e>omega} finishes the proof of~\eqref{multModulusOfContinuity}.

The  case of arbitrary $K\in\setsClass$ can be proved using approximation of $K$ by polytopes.
\end{proof}
\bibliographystyle{elsarticle-num}
\bibliography{mybibfile}

\end{document}